\newtheorem{example}{Example}[section]
\newtheorem{remark}[example]{Remark}
\newtheorem{theorem}[example]{Theorem}
\newtheorem{corollary}[example]{Corollary}
\newtheorem{definition}[example]{Definition}
\newtheorem{proposition}[example]{Proposition}
\newtheorem{lemma}[example]{Lemma}
\renewcommand{\leq}{\leqslant}
\def\bd{\gamma}
\def\ex{\text{ex}}
\def\qed{\hspace{3.5mm} \hfill \vbox{\hrule height 3pt depth 2 pt width 2mm}
\bigskip}
\def\X{{\mathbb X}}
\def\Sym{{\bf Sym}}
\def\QSym{{\it QSym}}
\def\WQSym{{\bf WQSym}}
\def\<{\langle}
\def\>{\rangle}
\def\CC{{\mathcal C}}
\def\Q{{\mathbb Q}}
\def\Z{{\mathbb Z}}
\def\N{{\mathbb N}}
\def\A{{\mathbb A}}
\def\y{{\bf y}}
\def\x{{\bf x}}
\def\bc{{\bf c}}
\def\bv{{\bf v}}
\def\bw{{\bf w}}
\def\pp{{\bm p}}
\def\qq{{\bm q}}
\def\Sx{ \mathcal{S}_x} 
\def\Spq{ \mathcal{S}_{pq}} 
\newdimen\squaresize
\newdimen\thickness         
\def\ie{{\em i.e. }}
\def\gf#1#2{\genfrac{}{}{0pt}{}{#1}{#2}}
\author[J.-C. Aval, V. Féray, J.-C. Novelli and J.-Y. Thibon]%
{Jean-Christophe Aval\addressmark{1},
  Valentin Féray\addressmark{2},
  Jean-Christophe Novelli\addressmark{3},
  \and Jean-Yves Thibon\addressmark{3}.}
\title{Super quasi-symmetric functions via Young diagrams}
\address{\addressmark{1}LaBRI, CNRS, Université de Bordeaux, 351 cours de la Libération, Talence, France\\
  \addressmark{2}Institüt für Mathematik, Universität Zürich, Winterthurerstrasse 190, Zürich, Switzerland\\
  \addressmark{3}LIGM, Université Paris-Est Marne-La-Vallée, 5 boulevard Descartes, Champs-sur-Marne, France}
\keywords{$P$-partitions, quasi-symmetric functions, Hopf algebras,
Young diagrams}
\begin{document}

\maketitle

\begin{abstract}
\paragraph{Abstract.}
We consider the multivariate generating series $F_P$ of $P$-partitions
in infinitely many variables $x_1, x_2, \dots$.
For some family of ranked posets $P$, it is natural to consider
an analog $N_P$ with two infinite alphabets.
When we collapse these two alphabets, we trivially recover $F_P$.
Our main result is the converse, that is, the explicit construction
of a map sending back $F_P$ onto $N_P$.
We also give a noncommutative analog of the latter.
An application is the construction of a basis
of $\WQSym$ with a non-negative multiplication table,
which lifts a basis of $\QSym$ introduced by K.~Luoto.

\paragraph{R\'esum\'e.}
Nous consid\'erons la s\'erie g\'en\'eratrice multi-vari\'ee $F_P$ des
$P$-partitions en un ensemble infini de variables $x_1, x_2, \dots$.
Pour une certaine famille d'ensembles ordonnés $P$,
on peut consid\'erer un analogue $N_P$ en deux
ensembles de variables.
En \'egalant les deux alphabets, on retrouve \'evidemment $F_P$.
Notre r\'esultat principal est la r\'eciproque de cela~: nous montrons qu'il
existe une op\'eration retournant $N_P$ \`a partir de $F_P$.
Nous donnons aussi un analogue non-commutatif de cette op\'eration.
Nous obtenons ainsi une nouvelle base de $\WQSym$, base qui rel\`eve une base
de K.~Luoto et dont les coefficients de structure sont positifs.
\end{abstract}

This article is an extended abstract of \cite{LongVersion} in the sense
that most results and proofs given here are also in \cite{LongVersion}.
Nevertheless, the focus here is very different from the focus in
\cite{LongVersion}.

\section{Introduction}
Consider a ranked poset $P$ on $n$ elements.
We consider non-decreasing functions $r$ from $P$
to the set $\N$ of positive integers, with the additional condition that $r(x) < r(y)$
whenever $x <_P y$ and $x$ has odd height in $P$
(we say that such functions satisfy the {\em order condition}).
These correspond to $P$-partition for some labeling of the elements of $P$
and thus fit in the general context studied by R.~Stanley in \cite{StOrderedStructure}.

Following I.~Gessel \cite{Ges}, we consider the multivariate
 generating series
 \begin{equation}\label{eq:def_FP}
  F_P(x_1, x_2, \cdots ) = \sum_{r} \prod_{i \in P} x_{r(i)},
  \end{equation}
where the sum runs over functions $r$ from $P$ to $\N$
satisfying the order condition.
This series in infinitely 
many variables turns out to be a quasi-symmetric function
(in fact, quasi-symmetric functions were origi-\pagebreak\\
nally introduced by I.~Gessel
in \cite{Ges} to give an algebraic framework 
to these multivariate generating series of $P$-partitions).

Notice that, in our framework, elements of even and odd heights in the poset $P$
play different roles.
Therefore it is also natural
to consider generating series in two alphabets $p_1, p_2, \cdots$ and $q_1,q_2, \dots$\footnote{In the case of posets of height $1$,
this generating series in two variables appears in representation theory of symmetric groups,
see \cite[Definition 2.2.1]{Fer09}.} :
 \begin{equation}\label{eq:def_NP}
 N_P\left( \begin{array}{ccc}
    p_1 & p_2 & \dots\\
    q_1 & q_2 & \dots
\end{array} \right) = \sum_{r} \left( \prod_{v_0 \in V_0} p_{r(v_0)}
\prod_{v_1 \in V_1} q_{r(v_1)} \right),
  \end{equation}
where $V_0$, resp. $V_1$, denotes the elements of $P$ of even, resp. odd, heights
(we shall use this notation throughout the paper)
and where the sum runs
over functions $r$ from $P$ to $\N$ satisfying the order condition.
Then $N_P$ is a quasi-symmetric in two sets of variables:
by analogy with the work of Stembridge \cite{StembridgeSuperSym},
we call these functions {\em super quasi-symmetric functions}.
An example of $F_P$ and $N_P$ is given in Section \ref{sec:P-part}.
Clearly, setting $p_i=q_i=x_i$ in $N_P$ allows us to recover $F_P$.

Our main result is a converse of this simple remark.
More precisely, we construct explicitly a map from quasi-symmetric functions
to series in two infinite alphabets which sends $F_P$ onto $N_P$.
Our construction is naturally stated in the language of Hopf algebras
calculus, as explained in Section \ref{sec:QSym_and_Virtual_Alphabets}.
Fix some integer $m$.
Define the virtual alphabet
\begin{equation}\label{eq:def_Virtual_X}
\X_m = \ominus (x_1) \oplus (x_2) \ominus (x_3) \cdots \ominus (x_{2m+1}), 
\end{equation}
where the $x_i$ are the following linear combinations of $p_i$ and $q_i$
\begin{equation}\label{eq:x_pq}
\left\{ \begin{array}{l}
x_{2i+1} = q_{i+1} + \cdots + q_m + p_{i+1} + \cdots + p_{m+1} ;  \\
x_{2i} = q_{i} + \cdots + q_m + p_{i+1} + \cdots + p_{m+1}.
\end{array} \right.
\end{equation}
Our main theorem is the following :
\begin{theorem}\label{thm:main}
Let $P$ be a ranked poset and $m$ a non-negative integer.
With the notations above,
\begin{equation}\label{eq:main}
N_P \left( \begin{array}{cccccc}
    p_1 & \dots & p_m & p_{m+1} & 0  & \dots \\
    q_1 & \dots & q_m & 0 & 0 & \dots
\end{array} \right) 
= (-1)^{|V_0|} F_P(\X_m).
\end{equation}
\end{theorem}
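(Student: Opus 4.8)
The plan is to compute $F_P(\X_m)$ directly from the Hopf-algebra rules governing the evaluation of a quasi-symmetric function on a virtual alphabet, and then to substitute the values of the $x_j$ prescribed by \eqref{eq:x_pq}. First I would read off the ``signed'' $P$-partition model for $F_P(\X_m)$. Viewing $\X_m$ as the totally ordered alphabet $x_1<x_2<\cdots<x_{2m+1}$ in which the odd-indexed letters are negative (entering through $\ominus$, hence through the antipode) and the even-indexed letters are positive (entering through $\oplus$, hence through the coproduct), the rule $h_n(\ominus A)=(-1)^n e_n(A)$ together with the coproduct expansion of $F_P$ yields
\[
F_P(\X_m)=\sum_{r'}(-1)^{o(r')}\prod_{i\in P}x_{r'(i)},
\]
where $r'$ runs over the functions $P\to\{1,\dots,2m+1\}$ that are $P$-partitions for this signed alphabet (weak at the even/positive levels, strict at the odd/negative levels, on top of the strictness already imposed by the order condition) and $o(r')$ counts the elements sent to an odd level. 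This is governed by the antipode of $\QSym$, and it is where the alternating pattern of signs in \eqref{eq:def_Virtual_X} is used.

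Next I would substitute \eqref{eq:x_pq}. The essential feature of these values is that consecutive letters telescope, namely $x_{2i-1}-x_{2i}=p_i$ and $x_{2i}-x_{2i+1}=q_i$, with boundary value $x_{2m+1}=p_{m+1}$; equivalently $p_i$ occurs in $x_j$ exactly for $j\le 2i-1$ and $q_i$ occurs in $x_j$ exactly for $j\le 2i$, so the highest level containing $p_i$ is the odd level $2i-1$ and the highest level containing $q_i$ is the even level $2i$. I would use this to set up a bijection on the surviving monomials sending the $x$-level $2i-1$ to the letter $p_i$ and the $x$-level $2i$ to the letter $q_i$: under the induced identification of the chain of levels $1<2<\cdots<2m+1$ with the merged order $p_1<q_1<p_2<q_2<\cdots<p_{m+1}$, the monotonicity and strictness conditions defining $r'$ match exactly those defining an $N_P$-function $r$, while the truncation to $2m+1$ levels reproduces the vanishing $p_{>m+1}=q_{>m}=0$ occurring on the left of \eqref{eq:main}. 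Crucially, under this correspondence a $V_0$-element is forced onto an odd (negative) level and a $V_1$-element onto an even (positive) level, so that $o(r')=|V_0|$ on every surviving term; this is precisely the global sign $(-1)^{|V_0|}$.

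The \emph{main obstacle} is to show that all the remaining contributions cancel, that is, that after expanding each factor $x_{r'(i)}$ into its constituent $p$- and $q$-letters via \eqref{eq:x_pq}, every monomial other than the ``extremal'' ones described above disappears. I expect to handle this with a sign-reversing involution built from the telescoping relations $x_{2i-1}-x_{2i}=p_i$ and $x_{2i}-x_{2i+1}=q_i$: a wrongly-coloured or non-extremal choice of letter for some element is matched with the choice obtained by moving that element between the two adjacent levels whose difference produces that letter, the two terms carrying opposite signs because the two levels have opposite parity. Making this involution well defined globally on $P$ (respecting the order condition and the interaction between distinct elements sharing a level) is the delicate point; an alternative I would keep in reserve is to prove the identity first for chains, by induction on the length using the deconcatenation coproduct, and then to deduce the general case from the fundamental theorem of $P$-partitions, using that the decomposition of both $F_P$ and $N_P$ over the linear extensions of $P$ preserves the $V_0/V_1$-colouring and hence the constant $|V_0|$.
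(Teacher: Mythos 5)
Your strategy---expand $F_P(\X_m)$ as a signed sum over ``super'' $P$-partitions, substitute \eqref{eq:x_pq}, and cancel everything except the extremal terms---is a genuinely different route from the paper's, and your identification of the surviving terms (level $2i-1$ carrying the letter $p_i$, level $2i$ carrying $q_i$) and of the global sign $(-1)^{|V_0|}$ is consistent with what the theorem asserts. But the proposal has a genuine gap at exactly the point you flag as the ``main obstacle'': the sign-reversing involution is never constructed, and in this approach that involution \emph{is} the theorem. The difficulty is not cosmetic: an involution that moves a single element $v$ between adjacent levels $2i-1$ and $2i$ must send valid signed $P$-partitions to valid ones, but if some $w$ comparable to $v$ occupies one of those two levels, the weak/strict requirements between $r'(v)$ and $r'(w)$ (which depend on the parities of the levels and on the heights of $v$ and $w$) can be destroyed by the move, so ``move the first non-extremal element to the adjacent level'' is not well defined as stated. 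Your fallback is also incomplete as described: the summands produced by the fundamental theorem of $P$-partitions are labeled chains carrying the coloring $V_0\sqcup V_1$ inherited from $P$, and these are in general \emph{not} ranked posets (the strict/weak pattern along the chain comes from the labeling, not from the colors), so the statement you would need for ``chains'' is a strictly more general identity for arbitrarily colored chains with arbitrary strict/weak patterns, which you neither formulate nor prove. A smaller but real issue: the expansion $F_P(\X_m)=\sum_{r'}(-1)^{o(r')}\prod_i x_{r'(i)}$ is asserted from general principles, but the paper defines $F(\X_m)$ only through \eqref{eq:def_MIXm}, so this superization formula is itself a lemma needing proof (it is true---compare the terms $x_{2i+1}^{k+\ell}$ in $M_{(k,\ell)}(\X)$, which show that weak and strict conditions swap on odd levels---but it is not a definition).

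For comparison, the paper's proof involves no monomial-level computation or cancellation at all. It shows that $N_P$ satisfies the functional equations \eqref{EqQZero}--\eqref{EqPZero} (Lemma \ref{LemNGInSol}, a short bijective argument on the functions $r$), that the change of variables identifies $\Spq$ with $\Sx$, and that by a dimension count every element of $\Sx$ is of the form $(F(\X_m))_{m\ge 0}$ for some $F\in\QSym$ (Theorem \ref{ThmSolEqQSym}); the existence of some $F$ with $N_P=(F(\X_m))_m$ then comes for free, and $F$ is pinned down by the specialization $x_{2i+1}=0$, i.e.\ $p_i\mapsto -x_{2i}$, $q_i\mapsto x_{2i}$, $p_{m+1}\mapsto 0$, which is also where the sign $(-1)^{|V_0|}$ appears. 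If you want to keep your direct approach, the most economical repair is to prove the superization lemma and then perform the cancellation one level-pair $\{2i-1,2i\}$ at a time, summing over all placements within the pair simultaneously rather than moving single elements; but be aware that this amounts to re-proving, in expanded form, exactly what Equation \eqref{EqQZero} encodes.
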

Note that the left-hand side for all values of $m$
determines the series $N_P$,
so, as claimed, the theorem allows to reconstruct $N_P$ explicitly from $F_P$.
We prove it at the end of Section \ref{sec:P-part}.

Let us say a word about the proof.
Notably, it does not involve any computation,
but relies on the structure of the spaces of solutions of 
two functional equations
presented in Section \ref{sec:Functional_Equations}.
These functional equations come from the analysis of smooth functions
on Young diagrams (and formula \eqref{eq:x_pq} has a transparent interpretation
in terms of Young diagrams).

Another remarkable feature of our result is that it readily extends to a
noncommutative framework.
In Section \ref{sec:NC}, we state a noncommutative analog
of Theorem \ref{thm:main}.

We end the article by an application of our noncommutative result.
In \cite{Luoto}, K.~Luoto introduced a basis of $\QSym$
with interesting applications to matroid theory.
Here, we consider the natural noncommutative lift of his basis in
$\WQSym$, the natural noncommutative analog of $\QSym$.
Thanks to our result,
we are able to show that this family is linearly independent
and hence, a basis of $\WQSym$.
This new basis has nice properties, as the fact that its multiplication table
contains only nonnegative integers.

The linear independence of this new basis is easily proved using 
our theorem involving Hopf algebra calculus,
while we have not been able to find an elementary proof of it.

\section{Definitions and notations}

\subsection{Stable polynomials}

A {\em stable} homogeneous
polynomial of degree $d$ is a sequence $R=(R_n(x_1,\ldots,x_n))_{n\ge 0}$ of
homogeneous polynomials of degree $d$ such that
$R_{n+1}(x_1,\ldots,x_n,0)=R_n(x_1,\ldots,x_n)$.
Intuitively, it is nothing else than a polynomial in infinitely many variables
$x_1,x_2,\cdots$.
Their set will be denoted by $\Q[X]$, where $X$ is the infinite variable set
$X=\{x_1,x_2,\dots\}$ (which should not be confused with the virtual alphabet
$\X_m$ defined by \eqref{eq:def_Virtual_X}).

In our framework, it is more natural to define
 stable polynomials by a sequence of polynomials in
an odd number of variables $(R_{2m+1})_{m \ge 0}$ such that
$R_{2m+1}(x_1,\dots,x_{2m-1},0,0)=R_{2m-1}(x_1,\dots,x_{2m-1}).$
This is not an issue, as such a sequence can be extended in a unique way to a
stable sequence $(R_n)_{n \ge 0}$ by setting
$R_{2m}(x_1,\dots,x_{2m}) = R_{2m+1}(x_1,\dots,x_{2m},0).$
\medskip

In the same spirit, we define an element of $\Q[\bm{p},\bm{q}]$ as a sequence
$(h_m)_{m \ge 0}$, where each $h_m$ is a polynomial in the $2m+1$ variables
$p_1,\dots,p_m,p_{m+1},q_1,\dots,q_m$
satisfying the stability property
\begin{equation}
h_{m+1}
\left( \begin{array}{ccccc}
    p_1 & \dots & p_m & p_{m+1} & 0\\
    q_1 & \dots & q_m & 0
\end{array} \right) = 
h_m
\left( \begin{array}{cccc}
    p_1 & \dots & p_m &p_{m+1}\\
    q_1 & \dots & q_m
\end{array} \right).
\end{equation}
Clearly, $N_P$ can be seen as an element of $\Q[\bm{p},\bm{q}]$.

\subsection{Quasi-symmetric functions and Hopf algebra calculus}
\label{sec:QSym_and_Virtual_Alphabets}

Quasi-symmetric functions were introduced by I.~Gessel in
relation with multivariate generating series of $P$-partition \cite{Ges}
and may be seen as a generalization of the notion of symmetric functions.
A comprehensive survey can be found in \cite{QSymBook}.

A {\em composition} of $n$ is a sequence $I=(i_1,i_2,\dots,i_r)$
of positive integers, whose sum is equal to $n$.
We denote by $\CC$ the set of all compositions 
(that is all compositions of all integers $n$).


The algebra $\QSym$ of quasi-symmetric functions is a subalgebra of the algebra
$\Q[X]$ of polynomials in the totally ordered commutative alphabet
$X=\{x_1,x_2,\dots\}.$
A basis of $\QSym$ is given by the {\em monomial
quasi-symmetric functions} $M$ indexed by compositions $I= (i_1,\dots, i_r)$,
where
\begin{equation}\label{eq:QSymM}
M_I = \sum_{a_1<\cdots<a_r} x_{a_1}^{i_1} \cdots x_{a_r}^{i_r} .
\end{equation}
By convention, $M_{()}=1$, where $()$ is the empty composition.
Note that the dimension
of $\QSym$ in degree $n$ is the number of compositions of $n$,
that is $2^{n-1}$.\medskip

An important property for Hopf algebras calculus is the duality
with the algebra $\Sym$ of {\em noncommutative symmetric functions}.
For a totally ordered alphabet $A=\{a_i|i\ge 1\}$ of noncommuting
variables, and $t$ an indeterminate, one sets
\begin{equation}
\sigma_t(A)=\prod_{i\ge 1}^\rightarrow (1-ta_i)^{-1} =\sum_{n\ge 0} S_n(A)t^n\,.
\end{equation}
The 
functions $S_n(A)$ generate a free associative algebra,
which is by definition $\Sym(A)$. One denotes by
$
S^I(A):= S_{i_1}S_{i_2}\cdots S_{i_r}
$
its natural basis. Actually, $\Sym(A)$ is the graded
dual of $QSym$. This can be deduced from the noncommutative Cauchy
formula
\begin{equation}
\prod_{i\ge 1}^\rightarrow \sigma_{x_i}(A) =
\prod_{i\ge 1}^\rightarrow \prod_{j\ge 1}^\rightarrow {(1-x_ia_j)}^{-1} =
\sum_{I \in \CC} M_I(X)S^I(A)
\end{equation}
which allows to identify $M_I$ with the dual basis of $S^I$ \cite{NCSF1,MR}.\medskip

Now, the evaluation of $M_I$ on the virtual alphabet $\X_m$ from the introduction
is implicitly defined by : 
\begin{equation}\label{eq:def_MIXm}
\sum_{I \in \CC} M_I(\X_m) S^I(A) = \prod_{1 \le i \le m+1}^\rightarrow \sigma_{x_i}(A)^{(-1)^i}\,.
\end{equation}
Then, we extend this definition to $F(\X_m)$, for any $F$ in $\QSym$ by linearity.

\begin{example}
Here are the functions $M_I(\X)$ for compositions $I$ of length at most 3:
\begin{align}
    M_{(k)}(\X)    &= - x_1^k + x_2^k - x_3^k +  \dots +x_{2m}^k - x_{2m+1}^k ; \label{eq:MkXm} \\
M_{(k,\ell)}(\X)   &= \sum_{i=1}^{2m+1} x_{2i+1}^{k+\ell}
                     + \sum_{1 \le i<j \le 2m+1} (-1)^{i+j} x_i^k x_j^\ell ;  \label{eq:MklXm}\\
M_{(k,\ell,m)}(\X) &= - \sum_{i=1}^{2m+1} x_{2i+1}^{k+\ell+m} + 
\sum_{\gf{i,j}{i<2j+1}} (-1)^i x_i^{k} x_{2j+1}^{\ell+m} \\
\nonumber & \quad +
          \sum_{\gf{i,j}{1\le 2i+1<j \le 2m+1}} (-1)^j x_{2i+1}^{k+\ell} x_j^m
+ \sum_{1 \le h<i<j \le 2m+1} (-1)^{h+i+j} x_h^k x_i^\ell x_j^m
\end{align}
\end{example}

\begin{remark}
{\rm
The definition of $M_I(\X_m)$ fits in the more general theory of Hopf algebras calculus,
which allows to define $F(\X)$, where $\X$ is written as a finite sum/difference of ordered alphabets.
In general $F \mapsto F(\X)$ is an algebra morphism, but we do not need this property here.
}
\end{remark}

\section{Two equivalent equations}
\label{sec:Functional_Equations}

\subsection{The functional equations}
The first functional equation that we consider is the following:
\begin{definition}
Let $\Sx$ be the space of stable polynomials $f=(f_{2m+1})_{m \ge 0}$ in
$\Q[X]$
such that, for each
$m \geq 1$ and each $1\le i \le 2m$, one has:
\begin{equation}
\label{eqfoncQS}
f_{2m+1}(x_1,\dots,x_{2m+1})\big|_{x_{i+1}=x_i}
  =f_{2m-1}(x_1,\dots,x_{i-1},x_{i+2},\ldots,x_{2m+1}).
\end{equation}
\end{definition}
Note that the left-hand side means that we substitute $x_{i+1}$ by $x_i$.
Then the equality must be understood as an equality between polynomials in
$x_1,\dots,x_{i-1},x_{i+2},\dots,x_n$.
In particular, the left-hand side must be independent of $x_i$.\medskip

The second functional equation we are interested in is the following:
\begin{definition}
Let $\Spq$ be the subspace of $\Q[\bm{p},\bm{q}]$ of elements $h=(h_m)_{m \ge 0}$ such
that, for all $m \ge 1$ and positive integer $i \le m$,
\begin{align}
    \left. h_m\left( \begin{array}{cccc} 
        p_1 & \dots & p_m &p_{m+1} \\
        q_1 & \dots & q_m
    \end{array} \right)\right|_{q_i=0}
    &=
    h_{m-1}\left( \begin{array}{cccccccc}
        p_1&\dots&p_{i-1}&p_i+p_{i+1}&\dots&p_m &p_{m+1}\\
        q_1&\dots&q_{i-1}&q_{i+1}&\dots&q_m
    \end{array} \right) ;
    \label{EqQZero} \\
    \left. h_m\left( \begin{array}{cccc} 
        p_1 & \dots & p_m &p_{m+1}\\
        q_1 & \dots & q_m
    \end{array} \right)\right|_{p_i=0}
    &=
    h_{m-1}\left( \begin{array}{cccccccc}
        p_1&\dots&p_{i-1}&p_{i+1}&\dots&p_m&p_{m+1}\\
        q_1&\dots&q_{i-1}+q_i&q_{i+1}&\dots&q_m
    \end{array} \right). \label{EqPZero}
\end{align}
By convention, in the second equation for $i=1$,
one should forget the column containing the undefined variables $p_0$ and $q_0$.
\end{definition}

\subsection{Origin: interlacing and multi-rectangular coordinates of Young
diagrams}

This section explains where our two equations come from.
It is written in an informal way
and can be safely skipped by a reader only interested in the proof of our theorem.\medskip

Consider a Young diagram $\lambda$ drawn with the Russian convention,
(\emph{i.e.}, draw it with the French convention, rotate it counterclockwise
by $45\degree$ and scale it by a factor $\sqrt{2}$).
Its border can be interpreted as the graph of a piecewise affine function $\omega_\lambda$.
We denote by $x_1, x_2,\dots,x_{2m+1}$ the abscissas of its local minima and
maxima in decreasing order, see Figure~\ref{FigRussian} (central part).

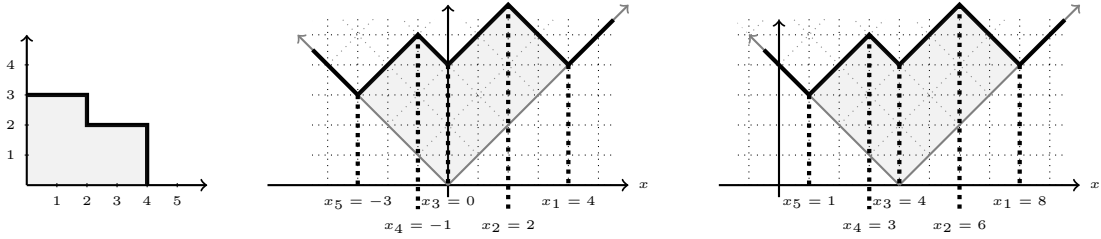
\begin{figure}[t]
    \[\begin{tikzpicture}[scale=.8]

      \begin{scope}[draw=black,scale=.5]

          \draw[->,thick] (0,0) -- (6,0);
          \foreach \x in {1, 2, 3, 4, 5}
              { \draw (\x, -2pt) node[anchor=north] {{\tiny{$\x$}}} -- (\x,
2pt); }

          \draw[->,thick] (0,0) -- (0,5);
          \foreach \y in {1, 2, 3, 4}
              { \draw (-2pt,\y) node[anchor=east] {{\tiny{$\y$}}} -- (2pt,\y); }

          \draw[ultra thick,draw=black] (4,0) -- (4,2) -- (2,2) -- (2,3) -- (0,3) ;
          \fill[fill=gray,opacity=0.1] (4,0) -- (4,2) -- (2,2) -- (2,3) -- (0,3) -- (0,0) -- cycle ;
      \end{scope}
 
\begin{scope}[xshift=7cm, yshift=-0cm, scale=0.5]
       \begin{scope}
          \clip (-4.5,0) rectangle (5.5,5.5);
          \draw[thin, dotted] (-6,0) grid (6,6);
          \begin{scope}[rotate=45,draw=gray,scale=sqrt(2)]
              \clip (0,0) rectangle (4.5,5.5);
              \draw[thin, dotted] (0,0) grid (6,6);
          \end{scope}
      \end{scope}

      \draw[->,thick] (-6,0) -- (6,0) node[anchor=west] {\tiny{$x$}};

      \draw[->,thick] (0,-0.4) -- (0,6);

%
%

\begin{scope}[draw=gray,rotate=45,scale=sqrt(2)]

          \draw[->,thick] (0,0) -- (6,0);

          \draw[->,thick] (0,0) -- (0,5);

          \draw[ultra thick,draw=black] (5.5,0) -- (4,0)  -- (4,2) -- (2,2) -- (2,3) --
          (0,3) -- (0,4.5) ;
          \fill[fill=gray,opacity=0.1] (4,0) -- (4,2) -- (2,2) -- (2,3) -- (0,3) -- (0,0) -- cycle ;

      \end{scope}

 \draw[ultra thick,dotted] (4,4) -- (4,0) node[anchor=north] {\tiny{$x_1=4$}};
 \draw[ultra thick,dotted] (2,6) -- (2,-.8) node[anchor=north] {\tiny{$x_2=2$}};
 \draw[ultra thick,dotted] (0,4) -- (0,0) node[anchor=north] {\tiny{$x_3=0$}};
 \draw[ultra thick,dotted] (-1,5) -- (-1,-.8) node[anchor=north] {\tiny{$x_4=-1$}};
 \draw[ultra thick,dotted] (-3,3) -- (-3,0) node[anchor=north] {\tiny{$x_5=-3$}};

\end{scope}

\begin{scope}[xshift=14.5cm, yshift=-0cm, scale=0.5]
       \begin{scope}
          \clip (-5.5,0) rectangle (5.5,5.5);
          \draw[thin, dotted] (-6,0) grid (6,6);
          \begin{scope}[rotate=45,draw=gray,scale=sqrt(2)]
              \clip (0,0) rectangle (4.5,5.5);
              \draw[thin, dotted] (0,0) grid (6,6);
          \end{scope}
      \end{scope}

      \draw[->,thick] (-6,0) -- (6,0) node[anchor=west] {\tiny{$x$}};

      \draw[->,thick] (-4,-0.4) -- (-4,5.5);

%
%

\begin{scope}[draw=gray,rotate=45,scale=sqrt(2)]

          \draw[->,thick] (0,0) -- (6,0);

          \draw[->,thick] (0,0) -- (0,5);

          \draw[ultra thick,draw=black] (5.5,0) -- (4,0)  -- (4,2) -- (2,2) -- (2,3) --
          (0,3) -- (0,4.5) ;
          \fill[fill=gray,opacity=0.1] (4,0) -- (4,2) -- (2,2) -- (2,3) -- (0,3) -- (0,0) -- cycle ;

      \end{scope}

 \draw[ultra thick,dotted] (4,4) -- (4,0) node[anchor=north] {\tiny{$x_1=8$}};
 \draw[ultra thick,dotted] (2,6) -- (2,-.8) node[anchor=north] {\tiny{$x_2=6$}};
 \draw[ultra thick,dotted] (0,4) -- (0,0) node[anchor=north] {\tiny{$x_3=4$}};
 \draw[ultra thick,dotted] (-1,5) -- (-1,-.8) node[anchor=north] {\tiny{$x_4=3$}};
 \draw[ultra thick,dotted] (-3,3) -- (-3,0) node[anchor=north] {\tiny{$x_5=1$}};

\end{scope}

    \end{tikzpicture}\vspace{-.5cm} \]
    \caption{Young diagram $\lambda=(4,4,2)$,
     the graph of the associated function $\omega_\lambda$
     and a non-centered version of it.
}
    \label{FigRussian}
\end{figure}

These numbers $x_1,x_2,\cdots,x_{2m+1}$
are called {\em (Kerov) interlacing coordinates}, see, {\it e.g.},
\cite[Section 6 with $\theta=1$]{OlshanskiPlancherelAverage}.
They are usually labeled with two different alphabets for minima
and maxima, but we shall rather use the same alphabet here and distinguish
between odd-indexed and even-indexed variables when necessary.

Note that not any decreasing sequence of integers can be obtained in this
way, as interlacing coordinates always satisfy the relation
$\sum_i (-1)^i x_i=0$.
A way to have independent coordinates is to consider {\em non-centered} Young diagrams.
By definition, the piecewise affine function $\omega$ of a non-centered Young diagram
is given by $x \mapsto \omega_\lambda(x-c) $ for some integer $c$
and {\em usual} Young diagram $\lambda$ -- see the right-most part on Figure~\ref{FigRussian}.

A Young diagram can be easily recovered from its Kerov coordinates
$x_1, \dots,x_{2m+1}$. Take some decreasing integral sequence $x_1, \dots,x_{2m+1}$.
First compute $c=\sum_i (-1)^i x_i$.
Then, to obtain its border, first draw the half-line $y=-x+c$ for $x\leq x_{2m+1}$,
then, without raising the pen, draw line segments of slope alternatively $+1$
and $-1$ between points of $x$-coordinates $x_{2m+1},x_{2m},\dots,x_1$ and
finally a half-line of slope $+1$ for $x \geq x_1$.
This last half-line has equation $y=x-c$ and 
the resulting broken line is the border of a non-centered Young diagram.

Apply now the same process to a non-increasing sequence 
$x_1, x_2,\dots,x_{2m+1}$ such that $x_i=x_{i+1}$.
Reaching the $x$-coordinate $x_i=x_{i+1}$, one has to change twice the sign of
the slope, that is, to do nothing. Hence, one obtains the same diagram as for
sequence 
$x_1,\cdots,x_{i-1},x_{i+2},\cdots,x_{2m+1}$.
Indeed, the same value $c$ is associated with both sequences.
Therefore, if one wants to interpret a stable polynomial in $\Q[X]$
as a function of Young diagrams, it is natural to require that it satisfies
Equation~\eqref{eqfoncQS}.\medskip

Equations~\eqref{EqQZero} and \eqref{EqPZero} arise in a similar way if we consider
multirectangular coordinates of Young diagrams.
These coordinates were introduced by R.~Stanley\footnote{In fact, R.~Stanley considered coordinates $\pp'$ and $\qq'$ related to
ours by $p'_i=p_i$ and $q'_i=q_1+\dots+q_i$. However, for our purpose,
we prefer the more symmetric version presented here.}
in \cite{St1}.
Consider two sequences $\pp$ and $\qq$ of non-negative integers
of the same length $m$.
We associate with these the Young diagram drawn on the left-hand side of Figure \ref{FigYDMultirec}.

\begin{figure}[t]
    \[\begin{tikzpicture}[scale=.8]
           
           \begin{scope}[scale=.6]
           \draw (0,0)--(5,0)--(5,1.5)--(3.2,1.5)--(3.2,3)--(1.3,3)--(1.3,4.8)--(0,4.8)--(0,0);
            \draw[<->] (0,-.2)--(1.3,-.2) node [below,midway] {\footnotesize $q_3$};
            \draw[<->] (1.3,-.2)--(3.2,-.2) node [below,midway] {\footnotesize $q_2$};
            \draw[<->] (3.2,-.2)--(5,-.2) node [below,midway] {\footnotesize $q_1$};
            \draw[<->] (-.2,0)--(-.2,1.5) node [left,midway] {\footnotesize $p_1$};
            \draw[<->] (-.2,1.5)--(-.2,3) node [left,midway] {\footnotesize $p_2$};
            \draw[<->] (-.2,3)--(-.2,4.8) node [left,midway] {\footnotesize $p_3$};
            \end{scope}
        
\begin{scope}[xshift=7cm, yshift=-0cm, scale=0.5, font=\footnotesize ]
       \begin{scope}
          \clip (-5.5,0) rectangle (5.5,5.5);
          \draw[thin, dotted] (-6,0) grid (6,6);
          \begin{scope}[rotate=45,draw=gray,scale=sqrt(2)]
              \clip (0,0) rectangle (4.5,5.5);
              \draw[thin, dotted] (0,0) grid (6,6);
          \end{scope}
      \end{scope}

      \draw[->,thick] (-6,0) -- (6,0);

      \draw[->,thick] (-4,-0.4) -- (-4,5.5);

\begin{scope}[draw=gray,rotate=45,scale=sqrt(2)]

          \draw[->,thick] (0,0) -- (6,0);

          \draw[->,thick] (0,0) -- (0,5);

          \draw[ultra thick,draw=black] (5.5,0) -- (4,0)  -- (4,2) -- (2,2) -- (2,3) --
          (0,3) -- (0,4.5) ;
          \fill[fill=gray,opacity=0.1] (4,0) -- (4,2) -- (2,2) -- (2,3) -- (0,3) -- (0,0) -- cycle ;

      \end{scope}

\draw[<->] (-4,3.7) to node[anchor=north] {$p_3$} (-3, 2.7) ;
\draw[<->] (-3,3.3) to node[anchor=south] {$q_2$} (-1, 5.3) ;
\draw[<->] (-1,4.7) to node[anchor=north] {$p_2$} (0, 3.7) ;
\draw[<->] (0,4.3) to node[anchor=south] {$q_1$} (2, 6.3) ;
\draw[<->] (2,5.7) to node[anchor=north] {$p_1$} (4, 3.7) ;

\end{scope}

\begin{scope}[xshift=14cm, yshift=-0cm, scale=0.5, font=\footnotesize ]
       \begin{scope}
          \clip (-5.5,0) rectangle (5.5,5.5);
          \draw[thin, dotted] (-6,0) grid (6,6);
          \begin{scope}[rotate=45,draw=gray,scale=sqrt(2)]
              \clip (0,0) rectangle (4.5,5.5);
              \draw[thin, dotted] (0,0) grid (6,6);
          \end{scope}
      \end{scope}

      \draw[->,thick] (-6,0) -- (6,0);

      \draw[->,thick] (-4,-0.4) -- (-4,5.5);

\begin{scope}[draw=gray,rotate=45,scale=sqrt(2)]

          \draw[->,thick] (0,0) -- (6,0);

          \draw[->,thick] (0,0) -- (0,5);

          \draw[ultra thick,draw=black] (5.5,0) -- (4,0)  -- (4,2) -- (2,2) -- (2,3) --
          (0,3) -- (0,4.5) ;
          \fill[fill=gray,opacity=0.1] (4,0) -- (4,2) -- (2,2) -- (2,3) -- (0,3) -- (0,0) -- cycle ;

      \end{scope}

\draw[<->] (-4,3.7) to node[anchor=north] {$p_4$} (-3, 2.7) ;
\draw[<->] (-3,3.3) to node[anchor=south] {$q_3$} (-1, 5.3) ;
\draw[<->] (-1,4.7) to node[anchor=north] {$p_3$} (0, 3.7) ;
\draw[<->] (0,4.3) to node[anchor=south] {$q_2$} (1, 5.3) ;
\draw[<->] (1,5.3) to node[anchor=south] {$q_1$} (2, 6.3) ;
\draw[<->] (2,5.7) to node[anchor=north] {$p_1$} (4, 3.7) ;

\end{scope}

    \end{tikzpicture}\vspace{-.5cm} \]
    \caption{Multirectangular coordinates of Young diagrams.
}
    \label{FigYDMultirec}
\end{figure}
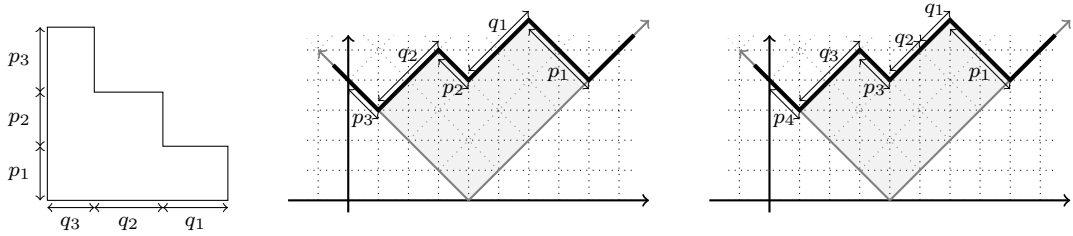

To construct non-centered Young diagrams, we introduce a new coordinate $p_{m+1}$,
which records the distance between the origin and the first corner of the diagram in Russian representation.
Unlike other multirectangular coordinates, $p_{m+1}$ can be negative
(in fact, $p_{m+1}$ simply corresponds to $x_{2m+1}$).
The central part of Figure \ref{FigYDMultirec} shows the multirectangular coordinates of 
a non-centered Young diagram.

Note that we allow some $p_i$ or some $q_i$ to be zero, so that the
same diagram can correspond to several sequences.
The right-hand side part of Figure~\ref{FigYDMultirec} shows
a different set of multirectangular (with $p_2=0$) associated
with the same non-centered Young diagram.

Equations~\eqref{EqQZero} and \eqref{EqPZero} exactly translate the fact
that a polynomial in multirectangular coordinates only depend on the underlying
Young diagrams and not on the chosen set of multirectangular coordinates.\medskip

To conclude, observe that multirectangular coordinates are related to interlacing coordinates by the
following linear changes of variables: for all $i \le m$,
\begin{equation}\label{EqChgtVar}
    \left\{ \begin{array}{l}
        p_i = x_{2i-1}-x_{2i};\\
        q_i = x_{2i}-x_{2i+1};\\
        p_{m+1} = x_{2m+1} ;
    \end{array} \right. \hspace{1cm}
     \left\{ \begin{array}{l}    
        x_{2i+1} = q_{i+1}+\dots+q_m + p_{i+1}+ \dots + p_{m+1}; \\
        x_{2i} = q_i+\dots+q_m+p_{i+1}+\dots+p_{m+1}.
    \end{array} \right.
\end{equation}

\subsection{Solution of the first equation}

\begin{theorem}\label{ThmSolEqQSym}
A stable polynomial $f=(f_{2m+1})_{m \ge 0}$ satisfies the functional equation \eqref{eqfoncQS}
if and only if there exists $F \in\QSym$ such that 
$ f_{2m+1}(x_1, \cdots, x_{2m+1}) = F(\X_m)$.
\end{theorem}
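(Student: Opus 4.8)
The plan is to show that the linear map $\Phi\colon\QSym\to\Q[X]$, $F\mapsto (F(\X_m))_{m\ge 0}$, is a bijection onto $\Sx$ in each degree. This splits into the inclusion $\Phi(\QSym)\subseteq\Sx$, which is a short computation on the dual side, and surjectivity, which I would obtain through a triangularity argument matching $\QSym$ with $\Sx$ via a carefully chosen family of coefficients.

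For the inclusion I work with the defining identity \eqref{eq:def_MIXm}, i.e. with the product $\prod_i\sigma_{x_i}(A)^{(-1)^i}$. Substituting $x_{i+1}=x_i$, the two consecutive factors become one and the same element $\sigma_{x_i}(A)$ raised to the opposite exponents $(-1)^i$ and $(-1)^{i+1}$; an element commutes with its own inverse, so their product collapses to $\sigma_{x_i}(A)^{(-1)^i+(-1)^{i+1}}=1$. What remains is the analogous product over $x_1,\dots,x_{i-1},x_{i+2},\dots,x_{2m+1}$, and the signs realign, since a surviving variable of original index $j<i$ keeps sign $(-1)^j$ while one of index $j>i+1$ acquires new index $j-2$ with sign $(-1)^j=(-1)^{j-2}$. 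This is exactly the generating series of the $M_I$ on the smaller virtual alphabet in those variables; reading off the coefficient of each $S^I(A)$ yields \eqref{eqfoncQS} for every $F\in\QSym$. Stability follows the same way from $\sigma_0(A)=1$.

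For surjectivity I would single out, for each composition $(a_1,\dots,a_r)$ of $n$, the \emph{left-packed} monomial $x_1^{a_1}\cdots x_r^{a_r}$, and let $\Psi\colon\Sx_n\to\Q^{\CC_n}$ (with $\CC_n$ the compositions of $n$) record the corresponding coefficients of a solution. The crux is that $\Psi$ is injective: a solution is determined by its left-packed coefficients. This is where \eqref{eqfoncQS} is genuinely used. Specializing $x_k=x_{k-1}$ and demanding independence of $x_{k-1}$ gives, for every monomial $R$ free of $x_{k-1},x_k$ and every $p\ge 1$, the relation $\sum_{a+b=p}c(x_{k-1}^a x_k^b R)=0$, where $c(\cdot)$ denotes the coefficient in $f$ (and stability lets me take $m$ large enough for each needed $k$). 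Given a monomial with an empty slot $k-1$ lying just below an occupied slot $k$, this expresses its coefficient as a combination of coefficients of monomials in which slot $k-1$ is now filled, at the expense of the exponent at $k$. Each such step strictly decreases the statistic $\sum_j j\cdot(\text{exponent of }x_j)$, so the process terminates on left-packed monomials. Hence every coefficient of $f$ is a $\Z$-linear combination of its left-packed ones, which proves $\Psi$ injective and gives $\dim\Sx_n\le|\CC_n|=2^{n-1}$.

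Finally I would verify that $\Psi\circ\Phi$ is triangular. As \eqref{eq:MkXm}, \eqref{eq:MklXm} and their length-$3$ analogue suggest, the left-packed monomials occurring in $M_I(\X_m)$ correspond precisely to the coarsenings of $I$, with $I$ itself contributing its left-packed monomial with coefficient $\pm 1$. Ordering compositions by refinement, $\Psi\circ\Phi$ is therefore unitriangular, hence invertible; together with the bound $\dim\Sx_n\le 2^{n-1}=\dim\QSym_n$ this forces $\Phi\colon\QSym_n\to\Sx_n$ to be an isomorphism, which is the statement of the theorem. The step I expect to cost the most care is the reduction behind the injectivity of $\Psi$: one must check that the relations really do reduce an arbitrary monomial to left-packed ones and that the weight statistic decreases at every step, so that the bound $2^{n-1}$ is reached and not beaten.
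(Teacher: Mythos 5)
Your proposal is correct, and its skeleton coincides with the paper's proof: both establish that each $M_I(\X_m)$ satisfies \eqref{eqfoncQS} by collapsing the two consecutive factors $\sigma_{x_i}(A)^{(-1)^i}\sigma_{x_i}(A)^{(-1)^{i+1}}=1$ in \eqref{eq:def_MIXm}, and both bound the dimension of the solution space in degree $n$ by $2^{n-1}$ via exactly your reduction (the paper's ``packed'' monomials are your left-packed ones, and its weight $\ell(X^\bv)=\sum_i i\,v_i$ is your decreasing statistic). The only place you genuinely diverge is the final step. The paper proves linear independence of the $M_I(\X)$ with a one-line specialization: setting all odd-indexed variables $x_{2i+1}=0$ turns $M_I(\X)$ into the ordinary monomial quasi-symmetric function $M_I(x_2,x_4,\dots)$, so independence is inherited from $\QSym$. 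You instead invert $\Psi\circ\Phi$ by triangularity with respect to coarsening. This works, and can be justified from \eqref{eq:def_MIXm} by expanding the product into factorizations of $I$ into consecutive blocks (blocks in even positions having at most one part), but two details need fixing: the matrix is triangular with diagonal entries $\pm 1$ (namely $(-1)^{\lceil \ell(I)/2\rceil}$), not unitriangular; and the left-packed monomials of $M_I(\X)$ are \emph{not} precisely the coarsenings of $I$ --- only those coarsenings whose even-position blocks are single parts of $I$ occur; for instance $x_1^{a}x_2^{b+c}$ does not occur in $M_{(a,b,c)}(\X)$. Since triangularity only requires ``nonzero entries lie above coarsenings'' together with a nonzero diagonal, your argument still goes through; it is simply heavier than the paper's specialization trick, which yields the same conclusion essentially for free.
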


\begin{proof}
Note that a polynomial $f$ satisfies Equation \eqref{eqfoncQS}
if and only if all its homogeneous components do.
Therefore it is enough to prove the statement for a homogeneous function $f$.
\smallskip

Let us first prove that the dimension of the space of homogeneous polynomials
in $\Q[X]$ of degree $n$ satisfying \eqref{eqfoncQS} is at most equal to
$2^{n-1}$.
We say that a monomial 
$X^\bv=x_1^{v_1}x_2^{v_2}\cdots$
in $\Q[X]$ is {\em packed} if $\bv$ can be written as $\bc,0,0,0,\dots$
with $\bc$ a composition (\ie a vector whose entries are positive integers).
Thus, the number of packed monomials of degree $n$ is $2^{n-1}$.
Let 
$P=\sum_{\bv} c_\bv X^\bv$ 
be a homogeneous polynomial of degree $n$, which is solution of
\eqref{eqfoncQS} (here, the sums runs over sequences of
non-negative integers of sum $n$).
Associate the integer $\ell(X^\bv)=\sum_{i \ge 1}iv_i$ 
to a monomial~$X^\bv$.

Then, we claim that all the coefficients of $P$ are determined by those of packed monomials.
To see this, consider a non-packed monomial $X^\bw=x_1^{w_1}x_2^{w_2}\cdots$ 
with $w_i=0$ and $w_{i+1}\neq 0$. 
We substitute $x_i=x_{i+1}=x$ in $P$.
Looking at the monomial
$$x_1^{w_1}\cdots x_{i-1}^{w_{i-1}}x^{w_{i+1}}x_{i+2}^{w_{i+2}}\cdots$$
that does not appear on the right-hand side of \eqref{eqfoncQS}, 
we get a linear relation between $c_\bw$ and the coefficients $c_\bv$ of the
monomials $X^\bv$ such that $\ell(X^\bv)<\ell(X^\bw)$,
whence the upper bound on the dimension.
\medskip

Now, we clearly have:
\[\left.\prod_{1 \le j \le m+1}^\rightarrow \sigma_{x_j}(A)^{(-1)^j}
\right|_{x_i=x_{i+1}} = 
\prod_{\gf{1 \le j \le m+1}{j \neq i,\ j \neq i+1}}^\rightarrow \sigma_{x_j}(A)^{(-1)^j}. \]
Looking at Equation \eqref{eq:def_MIXm} and using the fact that
 $S^I(A)$ is a linear basis of $\Sym(A)$, we get that, for each composition $I$,
the stable polynomial $(M_I(\X_m))_{m \ge 1}$ satisfies Equation \eqref{eqfoncQS}.
 Moreover, all $M_I(\X)$ are linearly
independent since, setting $x_{2i+1}=0$ in $\X$ transforms $M_I(\X)$ into
the usual monomial quasi-symmetric functions in even-indexed variables
$M_I(x_2,x_4,x_6,\cdots)$.
We have found $2^{n-1}$ linearly independent solutions of Equation \eqref{eqfoncQS}
in degree $n$, which finishes the proof.
\end{proof}\vspace{-.2cm}

\subsection{Bijection between the spaces of solutions}
\begin{proposition}
The substitution~\eqref{EqChgtVar} defines an algebra isomorphism
between $\Sx$ and $\Spq$.
\end{proposition}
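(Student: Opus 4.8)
The plan is to show that the linear change of variables~\eqref{EqChgtVar} carries $\Sx$ isomorphically onto $\Spq$ by proving that it intertwines the defining functional equation~\eqref{eqfoncQS} with the pair of equations~\eqref{EqQZero} and~\eqref{EqPZero}. First I would observe that the substitution in~\eqref{EqChgtVar} is an invertible linear change of variables (its inverse is given explicitly on the left-hand side of~\eqref{EqChgtVar}), hence it induces an algebra isomorphism $\Phi$ between the ambient polynomial rings $\Q[X]$ and $\Q[\bm{p},\bm{q}]$, compatible with the stability properties. The whole content is therefore to check that $f \in \Sx$ if and only if $\Phi(f) \in \Spq$, \emph{i.e.} that the single specialization $x_{i+1}=x_i$ matches, under the change of coordinates, the two specializations $q_j=0$ and $p_j=0$.

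The key step is to translate each specialization through~\eqref{EqChgtVar}. Setting $x_{i+1}=x_i$ corresponds, depending on the parity of $i$, to annihilating exactly one of the differences $x_{2j-1}-x_{2j}=p_j$ or $x_{2j}-x_{2j+1}=q_j$. Concretely, for even $i=2j$ the condition $x_{2j+1}=x_{2j}$ is precisely $q_j=0$, while for odd $i=2j-1$ the condition $x_{2j}=x_{2j-1}$ is precisely $p_j=0$. I would then verify that, under $q_j=0$, the coordinates of the remaining variables collapse exactly as prescribed by the right-hand side of~\eqref{EqQZero}: merging the two adjacent rectangles forces the two abscissas $x_{2j-1}$ and $x_{2j+1}$ to coincide, so that in the reduced diagram the consecutive $p$-coordinates add up, giving $p_j+p_{j+1}$, while the $q$-index list simply loses its $j$-th entry. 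A symmetric computation handles $p_j=0$ and~\eqref{EqPZero}, where it is the adjacent $q$-coordinates that merge into $q_{j-1}+q_j$. The boundary convention for $i=1$ in~\eqref{EqPZero} matches the case where the merged corner is the topmost one, which is exactly the situation the stated convention is designed to cover.

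The delicate point — the main obstacle I anticipate — is bookkeeping the reindexing of variables after a collapse so that it lines up \emph{verbatim} with the substitution appearing on the right-hand side of~\eqref{EqQZero} and~\eqref{EqPZero}, rather than merely up to a harmless relabeling. When $x_{i+1}=x_i$ is imposed and the variable is eliminated, the passage from a polynomial in $2m+1$ variables to one in $2m-1$ variables must reproduce, letter for letter, the coordinate lists written in those equations, including which pairs $p_{j}+p_{j+1}$ or $q_{j-1}+q_j$ are summed. I would resolve this by working on the geometric picture of Figure~\ref{FigYDMultirec}: the identity $x_{i+1}=x_i$ means that a local extremum of $\omega_\lambda$ degenerates, which geometrically removes a rectangle of zero width or height and fuses its two neighbors, and reading off the multirectangular coordinates of the fused diagram yields precisely the reindexed coordinate lists. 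Since $\Phi$ is a bijection and the functional equations correspond under it, $\Phi$ restricts to a linear bijection $\Sx \to \Spq$; because $\Phi$ is already a ring homomorphism on the ambient algebras, this restriction is an algebra isomorphism, completing the proof.
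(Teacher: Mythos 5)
Your proposal is correct and takes essentially the same route as the paper: both proofs apply the invertible linear substitution \eqref{EqChgtVar} and check that the specialization $x_{i+1}=x_i$ in \eqref{eqfoncQS} corresponds, according to the parity of $i$, to $q_j=0$ for \eqref{EqQZero} and $p_j=0$ for \eqref{EqPZero}, with the collapsed coordinate lists matching (the paper writes this out for \eqref{EqQZero} only, starting from $f\in\Sx$, and handles the converse by noting the inverse substitution works identically). One small slip in your wording: under $q_j=0$ it is $x_{2j}$ and $x_{2j+1}$ that coincide, not $x_{2j-1}$ and $x_{2j+1}$; the point is that after deleting these two entries the abscissa $x_{2j-1}$ becomes adjacent to $x_{2j+2}$, so the new $p$-coordinate is $x_{2j-1}-x_{2j+2}=p_j+q_j+p_{j+1}=p_j+p_{j+1}$, which is exactly the verification your argument (and the paper's) needs.
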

\begin{proof}
Consider an element $f=(f_{2m+1})_{m \ge 0}$ in $\Sx$.
Let $m\ge 0$.
Replace all variables $x_1,\ldots,x_{2m+1}$ in $f_{2m+1}$
according to \eqref{EqChgtVar}, and set
\begin{equation}
    \label{eq:def_h_from_f}
h_m\left( \begin{array}{cccc} 
        p_1 & \dots & p_m & p_{m+1} \\
        q_1 & \dots & q_m
    \end{array} \right) = 
    f_{2m+1}(x_1,\dots,x_{2m+1}).
\end{equation}

Clearly, $h_m$ is a polynomial in $p_1,\dots,p_m,p_{m+1},q_1,\dots,q_m$.
Moreover, by definition,
\[h_{m+1}\left( \begin{array}{ccccc}                                  
        p_1 & \dots & p_m &p_{m+1} &0 \\                                    
        q_1 & \dots & q_m &0                                      
    \end{array} \right) =
f_{2m+3}(x_1,\dots,x_{2m+1},0,0).\]\pagebreak

But, as $f$ is a stable polynomial, the right-hand side is equal to
$f_{2m+1}(x_1,\dots,x_{2m+1})$.
Thus Equation \eqref{eq:def_h_from_f} implies that $(h_m)_{m \ge 0}$ is an element of $\Q[\bm{p},\bm{q}]$.

We will now show that $h$ satisfies Equation \eqref{EqQZero}.
Let us now consider an integer $m \ge 1$ and variables
$p_1,\dots,p_m,p_{m+1},q_1,\dots,q_m$.
Assume additionally that $q_i=0$ for some $i$, which implies $x_{2i}=x_{2i+1}$
Thus, as $f$ is an element of $\Sx$, we have 
\[f_{2m+1}(x_1,\dots,x_{2m+1})=f_{2m-1}(x_1,\dots,x_{2i-1},x_{2i+2},\dots,x_{2m+1}).\]
Observe that the right-hand side corresponds to the definition of
 \[   h_{m-1}\left( \begin{array}{cccccccc}
     p_1&\dots&p_{i-1}&p_i+p_{i+1}&\dots&p_m & p_{m+1}\\
        q_1&\dots&q_{i-1}&q_{i+1}&\dots&q_m
    \end{array} \right), \]
which ends the proof of Equation \eqref{EqQZero}.
Equation \eqref{EqPZero} can be proved in a similar way.

Finally, from a stable polynomial $f$ in $\Sx$, we have constructed
an element $h=(h_m)_{m \ge 0}$ in $\Spq$.
This map from $\Sx$ to $\Spq$
is clearly an algebra morphism.\medskip

Its inverse can be constructed also by using Equation \eqref{EqChgtVar},
which proves that it is an isomorphism.
\end{proof}\vspace{-.7cm}

\begin{remark}
In light of the interpretation of $\Sx$ and $\Spq$ in terms of Young diagrams,
this isomorphism is not surprising, as both equations are in fact the same,
written with different sets of coordinates.
\end{remark}\vspace{.2cm}

\section{Generating functions of $P$-partition}
\label{sec:P-part}

Let us come back to our problem of $P$-partition.
If $P$ is a ranked poset, definitions of $F_P$ and $N_P$
were given in the introduction.
Let us illustrate these with an example.

\begin{example}{\rm
    Consider the poset $P_\ex$ drawn on the left-hand side of
    Figure \ref{fig:example_poset}.
    Let $r$ be a function from $P_{\text{ex}}$ to $\N$.
    Denote by $e$ and $f$ the images of the leftmost white elements, 
    by $g$ and $h$ the images of the black elements just to their right,
    then by $i$ the image of the white element to their right
    and finally by $j$  the image of the rightmost black element.
    Then, by definition, $r$ satisfies the order condition if and only if
    $e,f \le g,h < i \le j$.
Note the alternating large and strict inequalities.
Finally, one has
\begin{align*}
F_{P_\ex}(x_1,x_2,\dots) &= \! \sum_{e,f \le g,h < i \le j} \!
        x_e \, x_f \, x_g \, x_h \, x_i \, x_j ;\\
N_{P_\ex}\left( \begin{array}{ccc}
        p_1 & p_2 & \dots\\       
            q_1 & q_2 & \dots         
        \end{array} \right) &= \! \sum_{e,f \le g,h < i \le j} \!
        p_e \, p_f \, p_i \, q_g \, q_h \, q_j.
    \end{align*}
    }
\end{example}

    \begin{figure}
        \[\begin{tikzpicture}
            \tikzstyle{bv}=[circle,fill=black,inner sep=0pt,minimum size=2mm]
            \tikzstyle{wv}=[circle,draw=black,inner sep=0pt,minimum size=2mm]
            \node[wv] (v1) at (0,0) { };
            \node[wv] (v2) at (0,-1) { };
            \node[bv] (v3) at (1,0) { };
            \node[bv] (v4) at (1,-1) { };
            \node[wv] (v5) at (2,-.5) { };
            \node[bv] (v6) at (3,-.5) { };
            \draw[->] (v1) -- (v3);
            \draw[->] (v1) -- (v4);
            \draw[->] (v2) -- (v3);
            \draw[->] (v2) -- (v4);
            \draw[->] (v3) -- (v5);
            \draw[->] (v4) -- (v5);
            \draw[->] (v5) -- (v6);
        \end{tikzpicture}\qquad \qquad
        \begin{tikzpicture}[font=\scriptsize]
            \tikzstyle{bv}=[circle,fill=black,inner sep=0.1mm,minimum size=2mm]
            \tikzstyle{wv}=[circle,draw=black,inner sep=0.1mm,minimum size=2mm]
            \node[wv] (v1) at (0,0) {$2$};
            \node[wv] (v2) at (0,-1) {$3$};
            \node[bv] (v3) at (1,0) {\color{white} $1$};
            \node[bv] (v4) at (1,-1) {\color{white} $5$};
            \node[wv] (v5) at (2,-.5) {$6$};
            \node[bv] (v6) at (3,-.5) {\color{white} $4$};
            \draw[->] (v1) -- (v3);
            \draw[->] (v1) -- (v4);
            \draw[->] (v2) -- (v3);
            \draw[->] (v2) -- (v4);
            \draw[->] (v3) -- (v5);
            \draw[->] (v4) -- (v5);
            \draw[->] (v5) -- (v6);
        \end{tikzpicture}\]
        \caption{Example of an unlabeled ranked poset $P$, 
        and a labeled version of it.
        We represent here the graph of its covering relation
        that is its {\em Hasse diagram}.
        Edges are oriented from left to right.
Vertices in $V_0$ (resp. $V_1$) 
    are drawn in white (resp. black).}
        \label{fig:example_poset}
    \end{figure}
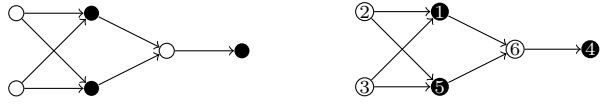
\begin{lemma}
\label{LemNGInSol}
Let $P$ be a ranked poset.
Then the stable polynomial $N_P$ belongs to $\Spq$.
\end{lemma}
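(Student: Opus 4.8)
The plan is to verify directly that $N_P$, viewed as the sequence $(h_m)_{m\ge 0}$ in which $h_m$ is obtained from $N_P$ by setting $p_{m+2}=p_{m+3}=\dots=0$ and $q_{m+1}=q_{m+2}=\dots=0$, satisfies the two defining equations \eqref{EqQZero} and \eqref{EqPZero} of $\Spq$. That this sequence already lies in $\Q[\bm p,\bm q]$ (the stability property) is immediate from the definition \eqref{eq:def_NP}, so the entire content is in the two functional equations. In the alphabet of $h_m$, a surviving function $r$ assigns to each even-height element $v_0\in V_0$ a value $r(v_0)\in\{1,\dots,m+1\}$ and to each odd-height element $v_1\in V_1$ a value $r(v_1)\in\{1,\dots,m\}$, subject to the order condition. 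I would establish \eqref{EqQZero} in full and deduce \eqref{EqPZero} by the symmetric argument.

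Before building the bijection, I would reformulate the order condition in a parity-free way: setting $\tilde r(v_0)=2\,r(v_0)$ on $V_0$ and $\tilde r(v_1)=2\,r(v_1)+1$ on $V_1$, the condition (weak $r(x)\le r(y)$ for a cover $x\lessdot y$ with $x$ of even height, strict $r(x)<r(y)$ for a cover with $x$ of odd height) becomes simply that $\tilde r$ is strictly increasing along every cover. In this picture even-height elements occupy even slots and odd-height elements occupy odd slots of $\{2,3,4,\dots\}$, and the odd slot $2i+1$ sits exactly between the even slots $2i$ and $2i+2$. This is the conceptual reason for the shape of \eqref{EqQZero}: setting $q_i=0$ forbids odd-height elements from the slot $2i+1$, so that the even values $i$ and $i+1$ are no longer separated and must be merged, which is precisely the coefficient $p_i+p_{i+1}$ on the right-hand side.

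Concretely, after setting $q_i=0$ the left-hand side of \eqref{EqQZero} is the sum over functions $r$ satisfying the order condition with $r(v_1)\ne i$ for all $v_1\in V_1$. On the right-hand side I would expand the entry $p_i+p_{i+1}$ of $h_{m-1}$, recording for each even-height element sitting at value $i$ a choice $\varepsilon(v_0)\in\{i,i+1\}$, and define a relabeling $r'\mapsto r$: on odd-height elements put $r(v_1)=r'(v_1)$ if $r'(v_1)<i$ and $r(v_1)=r'(v_1)+1$ if $r'(v_1)\ge i$; on even-height elements put $r(v_0)=r'(v_0)$ if $r'(v_0)<i$, $r(v_0)=r'(v_0)+1$ if $r'(v_0)>i$, and $r(v_0)=\varepsilon(v_0)$ if $r'(v_0)=i$. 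By construction the monomial weights match term by term, and this is a bijection between admissible $r$ (those avoiding odd value $i$) and pairs $(r',\varepsilon)$. The key and only delicate step is to check that the order condition is preserved. Since $P$ is ranked, every cover joins an even-height and an odd-height element, so it suffices to inspect the two cover types and verify the equivalence of the corresponding inequalities under the relabeling. The main obstacle is exactly this verification at and around the split/merge value $i$, where the strict-versus-weak asymmetry must be used: the inserted gap at the odd slot $2i+1$ is precisely what makes both choices of $\varepsilon$ admissible and what prevents an odd element near value $i$ from ever violating monotonicity. A short case analysis on the positions of $r'(x)$ and $r'(y)$ relative to $i$ settles it.

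Finally, \eqref{EqPZero} follows by the mirror-image argument: setting $p_i=0$ forbids even-height elements from the even slot $2i$, which lies between the odd slots $2i-1$ and $2i+1$; hence the even value $i$ is removed and the odd values $i-1$ and $i$ are merged, giving the coefficient $q_{i-1}+q_i$. The same expansion-and-relabeling scheme applies with the roles of $V_0$ and $V_1$ exchanged; the only point requiring attention is that the merge index is shifted by one relative to \eqref{EqQZero}, reflecting the same strict/weak asymmetry already visible in the $\tilde r$ reformulation. The convention stated after \eqref{EqPZero} for $i=1$ is exactly the boundary case in which the removed even slot has no odd slot below it.
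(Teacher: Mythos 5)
Your proof is correct and takes essentially the same route as the paper's: after setting $q_i=0$ you restrict to the $i$-avoiding functions, construct exactly the same merge/split relabeling between those and pairs $(r',\varepsilon)$ (with the factor $p_i+p_{i+1}$ accounting for the two choices of $\varepsilon$), and the order-condition check reduces to the same two critical configurations that the paper rules out using $i$-avoidance; your parity-free $\tilde r$ reformulation is a pleasant conceptual packaging (it is implicitly the interlacing-coordinate picture) but does not alter the argument. For the $i=1$ boundary case of \eqref{EqPZero}, the paper's observation that every monomial containing $q_1$ also contains $p_1$ is the precise content of your remark that the removed even slot has no odd slot below it.
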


\begin{proof}
    Let us check that $N_P$ satisfies Equation~\eqref{EqQZero}. 
    We define 
    \[ \begin{cases}
        p'_j =p_j & \text{ if }j<i ;\\
        p'_i =p_i+p_{i+1} ;& \\
        p'_j =p_{j+1} & \text{ if }j>i ;
    \end{cases} \qquad
    \begin{cases}  
        q'_j =q_j & \text{ if }j<i ;\\
        q'_j =q_{j+1} & \text{ if }j \ge i.
    \end{cases}\]
    These are the variables in the right-hand side of~\eqref{EqQZero}.
    Consider first the left-hand side:
    \[N_P \left.\left( \begin{array}{cccc}                               
            p_1 & \dots & p_m & p_{m+1}\\                                       
                q_1 & \dots &q_m                                       
            \end{array} \right)\right|_{q_i=0} = \sum_{r} \left( \prod_{v_0 \in V_0} p_{r(v_0)} 
            \prod_{v_1 \in V_1} q_{r(v_1)} \right),\]
where the sum runs over functions $r : P \to \{1,\dots,m+1\}$ satisfying the order condition
(and such that $r(v_1) \neq m+1$ for $v_1 \in V_1$).
Additionnally, one can restrict the sum to functions $r$ such that
$r(v_1) \neq i$ for any $v_1 \in V_1$
(we call these $i$-avoiding functions).

With any function $r$, we associate a function $r'=\Phi(r) : V \to \{1,\dots,m\}$ defined as follows:
\[r'(v) = r(v) \text{ if }r(v) \le i \text{ and } r'(v) = r(v)-1 
\text{ if }r(v) > i.\]
It is straightforward to check that,
if $r$ is $i$-avoiding and satisfies the order condition,
then $r'$ also satisfies the order condition.
Indeed, the only problem which could occur is $r(v_1)=i$ and $r(v_0)=i+1$
for a covering relation $v_1 <_P v_0$ with $v_1 \in V_1$,
which cannot happen since we forbid $r(v_1)=i$.

The preimage  of a given function $r'$ is obvious:
it is the set of functions $r$ such that
\[\begin{cases}
    r(v) = r'(v) & \text{ if }r'(v) < i ;\\
    r(v) \in \{i;i+1\} & \text{ if }r'(v) = i ;\\
    r(v)= r'(v)+1 & \text{ if }r'(v) > i.
\end{cases}\]
If $r'$ satisfies the order condition, all its $i$-avoiding pre-images $r$ 
also satisfy the order condition.
Once again, the only obstruction to this would be the case $r'(v_0)=r'(v_1)=i$
for some covering relation $v_0 <_P v_1$ with $v_0 \in V_0$.
In this case, preimages $r$ with $r(v_0)=i+1$ and $r(v_1)=i$ would not
satisfy the order condition but we forbid $r(v_1)=i$.
Now,
for any function $r'$, one has:
\[\sum_{r \in \Phi^{-1}(r')} \left( \prod_{v_0 \in V_0} p_{r(v_0)}      
\prod_{v_1 \in V_1} q_{r(v_1)} \right)
= \left( \prod_{v_0 \in V_0} p'_{r'(v_0)}      
\prod_{v_1 \in V_1} q'_{r'(v_1)} \right),\]
Summing over all functions $r': V \to \{1,\dots,m-1\}$ with the order condition,
we get equality~\eqref{EqQZero}.

The proof of \eqref{EqPZero} is similar.
Only the case $i=1$ is slightly different, but it is straightforward to check that
every monomial containing $q_1$ also contains $p_1$, which proves \eqref{EqPZero}
in the case $i=1$.
\end{proof}

\begin{remark}
{\rm
In~\cite[Section 1.5]{FS11}, an equivalent definition of $N_P$ as
a function on Young diagrams is given in the case of posets of height $1$
 (which correspond to bipartite graphs).
The fact that $N_P$ can be defined using only the Young diagram and
not its multirectangular coordinates explains that it belongs to $\Spq$.
}
\end{remark}

We can now prove Theorem \ref{thm:main}.

{\bf Proof of Theorem \ref{thm:main}:}
As $N_P$ belongs to $\Spq$, if we express it in terms of the variables $x_1,x_2,\dots$,
we get an element of $\Sx$.
By Theorem \ref{ThmSolEqQSym}, it is equal to $(F(\X_m))_{m \ge 0}$
for some quasi-symmetric function.

To identify $F$, we shall send all odd-indexed variables $x_{2i+1}$ to $0$.
This amounts to sending $p_i$ to $-x_{2i}$ (for $1 \le i \le m$),
$p_{m+1}$ to $0$ and $q_i$ to $x_{2i}$.
Therefore, under this substitution,
\[N_P \left( \begin{array}{ccccc}
    p_1 & \dots & p_m & p_{m+1} & 0\\
    q_1 & \dots & q_m & 0
\end{array} \right) = (-1)^{|V_0|} F_P(x_2,x_4,\cdots,x_{2m}).\]
On the other hand, it is straightforward to check from \eqref{eq:def_MIXm}
that, under the substitution $x_{2i+1}=0$
\[F(\X_m) = F(x_2,x_4,\cdots,x_{2m}).\]
As both equations are true for all values of $m$,
we get $F=(-1)^{|V_0|} F_P$ and the theorem follows.
\qed

\section{Noncommutative generalization}
\label{sec:NC}
\subsection{Noncommutative analog of the main theorem}

The natural noncommutative analogue of $\QSym$ is the algebra of
\emph{word quasi symmetric functions}, denoted by $\WQSym$.
We refer to the long version \cite{LongVersion} for basic facts about 
$\WQSym$ and for the definition of the evaluation of a function in $\WQSym$
on the virtual alphabet $\A_m$ defined below.

Take as data a ranked poset $\bm{P}$,
whose set of elements 
$V= V_0 \sqcup V_1$ is equal to $\{ 1, \dots, n \}$.
Then we define the non-commutative analog $\bm{F_P}$ of $F_P$ as follows:
\[\bm{F_P}(a_1,a_2,\dots)
  =\sum_{\gf{r:V \to \N}{\text{with order condition}}}
a_{r(1)} a_{r(2)} \dots a_{r(n)}.\]
Here, the $a_i$ are noncommuting variables.
Then $\bm{F_P}$ is a word quasi-symmetric function.

In the same way, we can define a noncommutative analog $\bm{N_P}$ of $N_P$:
\[\bm{N_P}\left( \begin{array}{ccc}
            b_1 & b_2 & \dots\\
                d_1 & d_2 & \dots
            \end{array} \right)=
\sum_{\gf{r:V \to \N}{\text{with order condition}}}
\bd_{r(1)} \bd_{r(2)} \dots \bd_{r(n)},\]
where we use the 
shorthand notation $\bd_{r(i)}=b_{r(i)}$ for $i \in V_0$
and $\bd_{r(i)}=d_{r(i)}$ for $i \in V_1$.

\begin{example}\label{ex:NG}{\rm
    Consider the ranked poset $\bm{P_\ex}$ drawn on the right-hand side
    of Figure \ref{fig:example_poset}.
    This is a labeled version of the poset $P_\ex$ on
    the left-hand side of the same Figure.

    Let $r$ be a function from its element set, that is $\{1,\cdots,6\}$ to $\N$.
    Define 
    \[e:=r(2),\ f:=r(3),\ g:=r(1),\ h:=r(5),\ i:=r(6),\ j:=r(4).\]
    Then, by definition, $r$ satisfies the order condition if and only if
    $e,f \le g,h < i \le j,$
so one has
\[\bm{N_{P_\ex}}\left( \begin{array}{ccc}
        b_1 & b_2 & \dots\\       
            d_1 & d_2 & \dots         
        \end{array} \right) = \sum_{e,f \le g,h < i \le j}
        d_g \, b_e\, b_f\, d_j\, d_h\, b_i,\]
 which is a noncommutative version of the function $N_{P_\ex}$
 given in Example \ref{ex:NG}.
    }
\end{example}

We shall now present a noncommutative analog of our main theorem.
Using two alphabets of noncommuting variables $b_i$ and $d_i$,
define the virtual alphabet
$
\A_m = \ominus (a_1) \oplus (a_2) \ominus (a_3) \cdots \ominus (a_{2m+1}), 
$
where the $a_i$ are the following linear combinations of $b_i$ and $d_i$:
\begin{equation}\label{eq:a_bd}
\left\{ \begin{array}{l}
a_{2i+1} = d_{i+1} + \cdots + d_m + b_{i+1} + \cdots + b_{m+1} ;  \\
a_{2i} = d_{i} + \cdots + d_m + b_{i+1} + \cdots + b_{m+1}.
\end{array} \right.
\end{equation}

\begin{theorem}
\label{thm:main_NC}
For any labeled ranked poset $\bm{P}$ with element set 
    $V=V_0 \sqcup V_1= \{1,\dots,n\}$ and any integer $m$
    \[ \bm{N_P}\left( \begin{array}{cccc}
            b_1 &  \dots & b_m &b_{m+1} \\
                d_1 & \dots & d_m
            \end{array} \right) = (-1)^{|V_0|} \big(\bm{F_P}(\A_m)\big).\]
\end{theorem}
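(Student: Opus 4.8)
The plan is to transpose the four-step architecture of the commutative argument to the noncommutative world, replacing $\QSym$ by $\WQSym$, commutative monomials by words, and the monomial basis $(M_I)$ of $\QSym$ by the monomial basis $(\bm{M}_u)$ of $\WQSym$ indexed by packed words. First I would introduce the noncommutative analogs $\mathcal{S}_a$ and $\mathcal{S}_{bd}$ of the spaces $\Sx$ and $\Spq$: $\mathcal{S}_a$ consists of stable noncommutative series in the letters $a_i$ satisfying the word analog of Equation~\eqref{eqfoncQS}, in which the substitution $x_{i+1}=x_i$ becomes the identification of the two adjacent letters $a_i$ and $a_{i+1}$; and $\mathcal{S}_{bd}$ consists of stable series in the $b_i,d_i$ satisfying the word analogs of Equations~\eqref{EqQZero} and \eqref{EqPZero}, where deleting $q_i$ (resp. $p_i$) becomes deleting the letter $d_i$ (resp. $b_i$) and the additive merging $p_i+p_{i+1}$ (resp. $q_{i-1}+q_i$) becomes the identification of the corresponding adjacent letters. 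Since the change of variables~\eqref{EqChgtVar} (in the noncommuting letters, as in~\eqref{eq:a_bd}) turns the relevant sums of letters into single letters, the very same substitution yields an algebra isomorphism between $\mathcal{S}_a$ and $\mathcal{S}_{bd}$; the proof of the Proposition never used commutativity, so it carries over verbatim.

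Next I would establish the two end-points. The noncommutative analog of Lemma~\ref{LemNGInSol} asserts that $\bm{N_P}$ lies in $\mathcal{S}_{bd}$, and its proof is the one already given: that argument rests entirely on the weight-preserving correspondence $\Phi$ between order-preserving functions $r$ and their contractions $r'$, and recording $\bd_{r(1)}\cdots\bd_{r(n)}$ as a word rather than a commutative monomial changes nothing, because the fibers of $\Phi$ contribute exactly the same words to both sides of the word analogs of~\eqref{EqQZero} and \eqref{EqPZero}. The noncommutative analog of Theorem~\ref{ThmSolEqQSym} is the substantial step: a stable noncommutative series satisfies the first functional equation if and only if it equals $\bm{F}(\A_m)$ for some $\bm{F}\in\WQSym$. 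I would prove it by the same two-sided estimate. On one side, each $\bm{M}_u(\A_m)$ with $u$ a packed word is a solution, because identifying $a_i$ with $a_{i+1}$ makes the adjacent factors $\sigma_{a_i}(A)^{(-1)^i}$ and $\sigma_{a_{i+1}}(A)^{(-1)^{i+1}}$ cancel, exactly as in the commutative computation; and these solutions are linearly independent, since setting all odd-indexed letters $a_{2i+1}$ to zero sends $\bm{M}_u(\A_m)$ to the ordinary monomial word function in the even-indexed letters.

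The main obstacle is the matching upper bound, i.e. the word analog of the packing argument. In the commutative case one uses that there are exactly $2^{n-1}$ packed monomials and that substituting $x_i=x_{i+1}$ expresses the coefficient of a non-packed monomial in terms of coefficients of monomials with strictly smaller value of the statistic $\ell$. For words the correct replacement of a packed monomial is a packed word, whose number in length $n$ is the ordered Bell number, which is precisely $\dim\WQSym_n$; I would then have to exhibit a statistic playing the role of $\ell$ and verify that identifying a pair of adjacent letters triangulates the coefficients of non-packed words against those of packed words. This bookkeeping is more delicate than in the commutative setting, since such an identification can merge the occurrences of two letters inside a word in several ways, but once the triangularity is in place the upper bound equals the number of packed words and the characterization follows.

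Finally I would conclude exactly as in the proof of Theorem~\ref{thm:main}. Because $\bm{N_P}\in\mathcal{S}_{bd}$, transporting it through the isomorphism places it in $\mathcal{S}_a$, so the noncommutative analog of Theorem~\ref{ThmSolEqQSym} gives $\bm{N_P}=\bm{F}(\A_m)$ for some $\bm{F}\in\WQSym$. To identify $\bm{F}$, I set every odd-indexed letter $a_{2i+1}$ to $0$, which by~\eqref{eq:a_bd} amounts to the substitutions $b_i\mapsto -a_{2i}$, $b_{m+1}\mapsto 0$ and $d_i\mapsto a_{2i}$; pulling the $|V_0|$ signs out of each word turns the left-hand side into $(-1)^{|V_0|}\bm{F_P}(a_2,a_4,\dots,a_{2m})$, while the right-hand side becomes $\bm{F}(a_2,a_4,\dots,a_{2m})$. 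As these agree for every $m$, we obtain $\bm{F}=(-1)^{|V_0|}\bm{F_P}$, which is the asserted identity.
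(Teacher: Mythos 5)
Your overall architecture matches the outline the paper itself sketches (two functional equations, solving the first, showing $\bm{N_P}$ solves the second), but there are two genuine gaps, and the first is the crux of the whole noncommutative story. Your proposal treats the evaluation $\bm{F}(\A_m)$ as if it were already defined by the verbatim analog of \eqref{eq:def_MIXm}, i.e.\ by a Cauchy-kernel identity whose right-hand side is an alternating product of factors $\sigma_{a_i}(\cdot)^{(-1)^i}$, and your proof that each $\bm{M}_u(\A_m)$ solves the word analog of \eqref{eqfoncQS} is the telescoping of adjacent factors $\sigma_{a_i}^{-1}\sigma_{a_{i+1}}$ when $a_i=a_{i+1}$. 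That machinery does not exist off the shelf in $\WQSym$: the commutative definition rests on the $\QSym$--$\Sym$ duality together with the calculus of differences of alphabets, which in Hopf-algebraic terms is governed by the antipode; since the antipode of $\WQSym$ is \emph{not} involutive, there is no general theory of $\ominus$ for word quasi-symmetric functions, so neither the alternating kernel nor the cancellation ``exactly as in the commutative computation'' is available. This is precisely why the authors state that they had to construct an ad-hoc definition of $\bm{F_P}(\A_m)$ \emph{so that the theorem holds} (deferred to Section 5 of the long version). Without specifying that definition, the identity you are proving is not even well-posed, and the step ``each $\bm{M}_u(\A_m)$ is a solution'' has no content.

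The second gap is the dimension upper bound for the solution space of the first functional equation. You correctly identify that the target is the number of packed words (which is $\dim \WQSym_n$) and that one needs triangularity of the coefficients under the identification $a_{i+1}=a_i$, but you explicitly defer it (``once the triangularity is in place \dots the characterization follows''). In the commutative case this is the only real computation in the proof of Theorem~\ref{ThmSolEqQSym}; in the word case identifying two letters interleaves their occurrences rather than adding exponents, so the induction on the statistic $\ell$ must be redone, not invoked. By contrast, the parts of your plan that do transfer are fine as stated: the isomorphism between the two solution spaces via \eqref{EqChgtVar}/\eqref{eq:a_bd} uses no commutativity, the proof that $\bm{N_P}$ satisfies the second equation goes through because the bijection $\Phi$ of Lemma~\ref{LemNGInSol} is weight-preserving word by word, and the final identification by setting $a_{2i+1}=0$ mirrors the end of the proof of Theorem~\ref{thm:main}.
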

\begin{proof}
Omitted for brevity. The main lines of the proof are the same than in the commutative framework:
defining two equivalent functional equations, solving the first one and showing that
$\bm{N_P}$ is a solution of the second one. 

A difficulty is that there is no general theory
of difference of alphabets in $\WQSym$ (its antipode is not involutive), so we had to find an ad-hoc
definition for $\bm{F_P}(\A_m)$ so that the theorem holds,
see \cite[Section 5]{LongVersion} for details.
 \end{proof}

\subsection{Noncommutative Luoto basis}
Consider the following family of ranked poset:
let $\bm{K}$ be a set-composition (that is an ordered set partition) $(K_0, \dots,K_{\ell-1})$ of $\{1,\dots,n\}$,
then we define the labeled poset $\bm{P_K}$ by its set of covering relations:
\[(K_0 \times K_1) \sqcup  (K_1 \times K_2) \sqcup\dots \sqcup
 (K_{\ell-2} \times K_{\ell-1}).\]
 Then $\bm{P_K}$ is ranked and elements of $K_i$ have height $i$.
 Thus
        \[
        V_0=K_0 \sqcup K_2 \sqcup \cdots  \text{ and }
        V_1=K_1 \sqcup K_3 \sqcup \cdots .
        \]

\begin{example}
    If $\bm{K}=\big( \{2,3\},\{1,5\},\{6\},\{4\} \big)$,
    then $\bm{P_K}$ is the graph of the right-hand side
    of Figure \ref{fig:example_poset}.
\end{example}

\begin{proposition}
    The functions $\bm{F_{P_K}}$, where $\bm{K}$ runs over set compositions,
    form a $\Z$-basis of $\WQSym$.
   \end{proposition}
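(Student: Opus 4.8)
The plan is to show that the family $\{\bm{F_{P_K}}\}$, indexed by set-compositions $\bm{K}$ of $\{1,\dots,n\}$, is both of the right cardinality and linearly independent, whence a basis. The dimension of $\WQSym$ in degree $n$ is the number of ordered set partitions of $\{1,\dots,n\}$ (the ordered Bell number), which is exactly the number of set-compositions $\bm{K}$; so the count matches automatically and only linear independence needs proof. Following the hint in the introduction, I would \emph{not} attempt an elementary combinatorial argument but instead deduce independence from the noncommutative main theorem (Theorem~\ref{thm:main_NC}).

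The key idea is that the $\bm{N_P}$ side of Theorem~\ref{thm:main_NC} separates the posets $\bm{P_K}$ cleanly, and the theorem transports this separation back to the $\bm{F_{P_K}}$ side. Concretely, suppose a $\Z$-linear combination $\sum_{\bm{K}} c_{\bm{K}} \bm{F_{P_K}} = 0$ in degree $n$, with all $\bm{K}$ having the same number $\ell$ of blocks (one may restrict to fixed $\ell$ since, as I will argue below, the number of blocks is recoverable). Apply the evaluation on the virtual alphabet $\A_m$ and use $(-1)^{|V_0|}$ to pull this through the theorem: since $|V_0|$ depends only on $\ell$ (and $n$), it is a common sign, so I obtain $\sum_{\bm{K}} c_{\bm{K}} \bm{N_{P_K}} = 0$ as an identity in the two noncommuting alphabets $b_i, d_i$. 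Thus it suffices to prove that the functions $\bm{N_{P_K}}$ are linearly independent.

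The independence of the $\bm{N_{P_K}}$ is where the combinatorics becomes transparent, and I expect this to be the main (but manageable) obstacle. For the poset $\bm{P_K}$ with $\bm{K}=(K_0,\dots,K_{\ell-1})$, the order condition forces $r$ to be constant on each block in a strongly controlled way: reading off a single monomial $\bd_{r(1)}\cdots\bd_{r(n)}$ recovers, via the positions of $b$'s versus $d$'s (the parity of the block), which label set $\{1,\dots,n\}$ lies at which height, hence recovers $\bm{K}$ itself. The cleanest way to extract this is to specialize: evaluate $\bm{N_{P_K}}$ by sending $r$ to a strictly increasing assignment across the chain of blocks, i.e.\ pick the lexicographically minimal strict evaluation giving each block a distinct value $1 < 2 < \dots < \ell$. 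This produces a distinguished \emph{leading word} $w_{\bm{K}}$ in which the letters from block $K_t$ all carry index $t{+}1$ and the $b/d$ type is determined by the parity of $t$; distinct set-compositions $\bm{K}$ yield distinct leading words $w_{\bm{K}}$, because the word records both the block-membership of each element of $\{1,\dots,n\}$ and its height. Hence the $\bm{N_{P_K}}$ are \emph{triangular} with respect to the lexicographic order on words (each has a distinct monomial appearing with coefficient $1$ that no other $\bm{N_{P_K}}$ can produce with a smaller or equal index pattern), so they are linearly independent, forcing all $c_{\bm{K}}=0$.

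It remains only to handle the reduction to fixed $\ell$ used above: the number $\ell$ of blocks equals one plus the number of strict descents in the maximal chain, and this is visible in $\bm{F_{P_K}}$ through the number of forced strict inequalities, so terms with different $\ell$ cannot cancel against each other. Combining this with the count of set-compositions equal to $\dim \WQSym_n$ completes the argument that $\{\bm{F_{P_K}}\}$ is a $\Z$-basis of $\WQSym$.
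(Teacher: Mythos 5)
Your overall route is the same as the paper's: invoke Theorem~\ref{thm:main_NC} to transfer the problem to linear independence of the $\bm{N_{P_K}}$, then separate the $\bm{N_{P_K}}$ by a distinguished monomial. However, there are concrete problems. First, your sign bookkeeping is wrong: $|V_0|=|K_0|+|K_2|+\cdots$ is \emph{not} determined by $n$ and the number of blocks $\ell$ (compare $\bm{K}=(\{1\},\{2,3\})$ and $\bm{K}'=(\{1,2\},\{3\})$, both with $\ell=2$ but $|V_0|=1$ and $2$), so there is no ``common sign'' for fixed $\ell$. Moreover your reduction to fixed $\ell$ (``terms with different $\ell$ cannot cancel against each other'') is itself an unproved independence statement, i.e.\ circular. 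Fortunately the whole detour is unnecessary: from $\sum_{\bm{K}} c_{\bm{K}}\bm{F_{P_K}}=0$ one gets, by evaluating at $\A_m$ for every $m$, the identity $\sum_{\bm{K}} c_{\bm{K}}(-1)^{|V_0(\bm{K})|}\bm{N_{P_K}}=0$; independence of the $\bm{N_{P_K}}$ then forces $c_{\bm{K}}(-1)^{|V_0(\bm{K})|}=0$, hence $c_{\bm{K}}=0$ --- the signs are nonzero scalars and can simply be absorbed into the coefficients.

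The genuine gap is the triangularity claim. Your distinguished word $w_{\bm{K}}$ (block $K_t\mapsto$ value $t+1$) is \emph{not} extremal among the monomials of $\bm{N_{P_K}}$ for any order you specify --- it is only minimal among assignments taking distinct values on distinct blocks --- so the standard leading-term argument does not apply to it. What you would actually need is the statement that $w_{\bm{K}}$ occurs in no $\bm{N_{P_{K'}}}$ with $\bm{K}'\neq\bm{K}$; this happens to be true, but it requires a real (inductive) proof: one shows $K'_0=K_0$ because the value-$1$ letters are of type $b$ and cannot sit beyond $K'_0$, and because an element of value $\geq 3$ in $K'_0$ would leave no admissible block for the value-$2$ elements of $K_1$; then one inducts on the blocks. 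You assert this in a parenthesis (``no other $\bm{N_{P_K}}$ can produce [it] with a smaller or equal index pattern'') without justification. The paper avoids the difficulty by choosing instead the monomial of \emph{lexicographically largest evaluation} (counts of $b_1,d_1,b_2,\dots$), whose extremal characterization makes the triangularity, and hence independence, immediate. Finally, cardinality plus $\Q$-linear independence only yields a $\Q$-basis, while the proposition asserts a $\Z$-basis: you still must show every integer element of $\WQSym$ is an \emph{integer} combination of the $\bm{F_{P_K}}$. The paper does this by observing that $\bm{F}(\A_m)$ has integer coefficients, extracting integer expansion coefficients from the unitriangular system, and then substituting $a_{2i+1}=0$ to return from the $\bm{N_{P_K}}$ to $(-1)^{|V_0|}\bm{F_{P_K}}$; your proposal omits this step entirely.
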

\begin{proof}
    By a dimension argument, proving that $\bm{F_{P_K}}$ is a basis
    of $\WQSym$ reduces to proving the linearly independence.
But, thanks to Theorem \ref{thm:main_NC}, one can prove instead that
$(\bm{N_{P_K}})$ is linear independent.

    With a noncommutative monomial (a word) in $b_i$ and $d_i$,
    we can associate its {\em evaluation}, which we define as
    the integer sequence
    (number of $b_1$, number of $d_1$, number of $b_2$, $\dots$).
    It is immediate to see that the monomial in $\bm{N_{P_K}}$
    with the lexicographically largest evaluation is obtained as follows:
    it has letters $b_1$ in positions given by $K_0$, letters $d_1$
    in position given by $K_1$, letters $b_2$ in positions given by $K_2$, and so on.
    It follows that the set-composition $\bm{K}$ can be recovered from
    the monomial of lexicographically largest evaluation in $\bm{N_{P_K}}$,
    which implies the linear independence of the $\bm{N_{P_K}}$.

    Now, if $\bm{F} \in \WQSym$ has integer coefficients in $a$,
    then $\bm{F}(\A_m)$ has integer coefficients in $b$ and $d$
    and the argument above shows that it is an integral linear combination of
    the $\bm{N_{P_K}}$.
    But, as in the proof of Theorem~\ref{thm:main},
    substituting $a_{2i+1}=0$ sends back $\bm{F}(\A_m)$ to $\bm{F}$
    and $N_{P_K}$
    to $(-1)^{|V_0|} \bm{F_{P_K}}$, showing that the latter is a $\Z$-basis of $\QSym$.
\end{proof}

This basis $\bm{F_{P_K}}$ is a natural noncommutative analog of a basis studied by K.~Luoto
\cite{Luoto}.
We have not been able to prove the linear independence of $\bm{F_{P_K}}$ without
using our theorem to duplicate the alphabet.
In particular, Luoto's proof to show that his basis is indeed linearly independent
does not seem to extend to the noncommutative framework.
The following properties illustrate, in our opinion, the relevance of this new basis of $\WQSym$.

\begin{proposition}\label{prop:positive_expansion}
For any labeled rank poset $\bm{P}$,
the function $\bm{F_{P}}$ expands as a linear combination with 
{\em nonnegative integer coefficients} of $\bm{F_{P_K}}$.
\end{proposition}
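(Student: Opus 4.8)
The plan is to prove the (slightly stronger) fact that the coefficients are all $0$ or $1$, by realizing the expansion as a disjoint-union decomposition of the underlying combinatorial objects, in the spirit of Stanley's fundamental lemma of $P$-partitions. For a ranked poset $\bm Q$ on $V$, let $C_{\bm Q}$ denote the set of \emph{$\bm Q$-compatible} packed words, i.e. packed words $u$ which, read as functions $V\to\N$, satisfy the order condition for $\bm Q$. First I would expand everything in the monomial basis $\mathbf M_u$ of $\WQSym$: grouping the functions $r$ in $\bm{F_P}=\sum_r a_{r(1)}\cdots a_{r(n)}$ according to the pattern (packed word) of $(r(1),\dots,r(n))$, and noting that the order condition depends only on this pattern, one gets $\bm{F_P}=\sum_{u\in C_{\bm P}}\mathbf M_u$ with all coefficients $0$ or $1$, and likewise $\bm{F_{P_K}}=\sum_{u\in C_{\bm{P_K}}}\mathbf M_u$. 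Hence it suffices to produce a set $\mathcal L(\bm P)$ of set-compositions with
\[ C_{\bm P}=\bigsqcup_{\bm K\in\mathcal L(\bm P)}C_{\bm{P_K}}, \]
which immediately gives $\bm{F_P}=\sum_{\bm K\in\mathcal L(\bm P)}\bm{F_{P_K}}$.

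I would take $\mathcal L(\bm P)$ to be the \emph{colour-alternating linear extensions}: set-compositions $\bm K=(K_0,\dots,K_{\ell-1})$ of $V$ with $K_i\subseteq V_0$ for $i$ even and $K_i\subseteq V_1$ for $i$ odd, and such that $x<_P y$ forces the block of $x$ to strictly precede the block of $y$. The colour constraint makes $V_0(\bm{P_K})=V_0(\bm P)$, so the alternation of weak and strict inequalities in $\bm{P_K}$ matches that of $\bm P$; together with the block-separation of comparable elements this is exactly the statement that $\bm K$ \emph{extends} $\bm P$, i.e. $C_{\bm{P_K}}\subseteq C_{\bm P}$. The heart of the argument is then to attach to each $u\in C_{\bm P}$ a \emph{unique} $\bm K(u)\in\mathcal L(\bm P)$ with $u\in C_{\bm{P_K(u)}}$. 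I would build $\bm K(u)$ by reading the value levels of $u$ in increasing order, emitting at level $v$ the $V_0$-part $A_v=u^{-1}(v)\cap V_0$ followed by the $V_1$-part $B_v=u^{-1}(v)\cap V_1$, and then merging consecutive parts of the same colour into one block. The minimal value is always attained by a $V_0$-element (any odd-height element lies above an even-height one, which by the order condition shares its value), so the merged sequence starts with a $V_0$-block and the colours alternate, giving a colour-alternating $\bm K(u)$ with $u\in C_{\bm{P_K(u)}}$.

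The step I expect to be delicate is verifying that $\bm K(u)$ genuinely extends $\bm P$, i.e. that the merge never places two $\bm P$-comparable elements in a common block. Here rankedness is essential: if $x<_P y$ with $x,y$ of the same parity (say both in $V_0$), a maximal chain from $x$ to $y$ passes through an element $z$ of the intervening odd height, and the order condition yields $r(x)\le r(z)<r(y)$; thus the $V_1$-level strictly between $u(x)$ and $u(y)$ is nonempty and the merge collapsing $x$ with $y$ is never triggered. Running this over all comparable pairs shows $\bm K(u)\in\mathcal L(\bm P)$. Finally I would prove uniqueness: for any $\bm K\in\mathcal L(\bm P)$ with $u\in C_{\bm{P_K}}$, the weak/strict inequalities satisfied by $u$ across blocks force the value ranges of the blocks, and the colouring plus the merge bookkeeping above pin down the block partition, so $\bm K=\bm K(u)$. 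Uniqueness gives simultaneously disjointness and covering of the $C_{\bm{P_K}}$, hence the displayed decomposition and the claimed nonnegative (in fact $0/1$) expansion; this is the exact noncommutative analogue of the classical expansion of $F_P$ into fundamental quasi-symmetric functions over linear extensions.
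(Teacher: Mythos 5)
Your proposal is correct and is essentially the paper's own argument: the paper also obtains the expansion by splitting the defining sum of $\bm{F_P}$ according to how $r$ orders the incomparable pairs in $V_0 \times V_1$, each nonempty class being exactly the defining sum of $\bm{F_{P_K}}$ for a set composition whose blocks alternate between $V_0$ and $V_1$ and refine $\bm{P}$ --- the same $0/1$ decomposition you index by $\bm{K}(u)$. Your explicit level-merging construction of $\bm{K}(u)$, the rankedness argument for comparable same-parity pairs, and the uniqueness step are a detailed implementation of what the paper leaves as a brief sketch.
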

{\bf Sketch of proof.}
Let $r$ be a function from $\bm{P}$ to $\N$ satisfying the order condition.
For any pair $(x,y)$ in $V_0 \times V_1$ such that $x$ and $y$ are incomparable in $P$
one has either $r(x) \leq r(y)$ or $r(y) < r(x)$.
Let us split the sum in \eqref{eq:def_FP} depending on
 the set of such pairs $(x,y)$ such that $r(x) \leq r(y)$.
 Then the nonempty sums correspond to some $F_{\bm{Q}}$,
 where $\bm{Q}$ is the ranked poset in which any element of odd height is comparable
 with any element of even height.
 Such posets are exactly the $\bm{P_K}$, which conclude the proof.
\qed

\begin{corollary}
The multiplication table of the basis $\bm{F_{P_K}}$ has nonnegative integer entries.
\end{corollary}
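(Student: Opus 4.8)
The plan is to reduce the statement to Proposition~\ref{prop:positive_expansion} by observing that a product of two basis elements is again the generating function $\bm{F}$ of a single ranked poset, to which the positivity of Proposition~\ref{prop:positive_expansion} then applies verbatim. The entries of the multiplication table are the structure constants $c^{\bm{M}}_{\bm{K},\bm{L}}$ defined by $\bm{F_{P_K}}\cdot\bm{F_{P_L}} = \sum_{\bm{M}} c^{\bm{M}}_{\bm{K},\bm{L}}\,\bm{F_{P_M}}$, and the goal is exactly to show these are nonnegative integers.

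The key step is to identify multiplication in $\WQSym$ of these $P$-partition series with disjoint union of posets. Let $\bm{K}$ and $\bm{L}$ be set-compositions of $\{1,\dots,n\}$ and $\{1,\dots,n'\}$ respectively, and form the labeled poset $\bm{Q}$ obtained as the disjoint union of $\bm{P_K}$ and a copy of $\bm{P_L}$ whose labels are all shifted by $n$, so that the two element sets together give $\{1,\dots,n+n'\}$. Since there are no covering relations between the two parts, a function $r:V(\bm{Q})\to\N$ satisfies the order condition exactly when its restrictions to the two parts do, and the noncommutative monomial $\bd_{r(1)}\cdots\bd_{r(n+n')}$ it produces is the concatenation of the monomials coming from the two parts. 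Reading off the definitions of $\bm{F_P}$ as a series in noncommuting variables, the algebra product is literally $\bm{F_{P_K}}\cdot\bm{F_{P_L}} = \bm{F_{\bm{Q}}}$.

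I would then check that $\bm{Q}$ is a labeled ranked poset with the height structure inherited from its two components: each connected component keeps the ranking it had in $\bm{P_K}$ or $\bm{P_L}$, so $V_0(\bm{Q}) = V_0(\bm{P_K})\sqcup V_0(\bm{P_L})$ and likewise for $V_1$. In particular $\bm{Q}$ is a valid input to Proposition~\ref{prop:positive_expansion}. Applying that proposition to $\bm{Q}$ shows that $\bm{F_{\bm{Q}}}$, and hence the product $\bm{F_{P_K}}\cdot\bm{F_{P_L}}$, expands as a linear combination of the $\bm{F_{P_M}}$ with nonnegative integer coefficients, which is precisely the claim. The only point requiring care, and the one I would write out in full, is the first step, namely that the series product really is concatenation of words and therefore matches the disjoint-union order condition; the rest is a direct invocation of the preceding proposition.
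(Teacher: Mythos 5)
Your proposal is correct and follows exactly the paper's own route: identify $\bm{F_{P_K}}\cdot\bm{F_{P_{K'}}}$ with $\bm{F_{P_K \sqcup P_{K'}}}$, the generating function of the disjoint union (with labels of the second poset shifted), and then invoke Proposition~\ref{prop:positive_expansion}. The only difference is that you spell out the concatenation-of-words argument and the inherited ranking, which the paper leaves implicit.
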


\begin{proof}
The product $\bm{F_{P_K}} \cdot \bm{F_{P_{K'}}}$ is simply $\bm{F_{P_K \sqcup P_{K'}}}$.
Proposition~\ref{prop:positive_expansion} ends the proof.
\end{proof}

{\small

    }

\end{document}